\theoremstyle{definition}
\newtheorem{defi}{Definition}[subsection]
\newtheorem{eg}[defi]{Example}
\newtheorem{rem}[defi]{Remark}
\newtheorem{cor}[defi]{Corollary}
\newtheorem{theo}[defi]{Theorem}
\newtheorem{fact}[defi]{Fact}
\newtheorem{defi2}{Definition}[section]
\newtheorem{eg2}[defi2]{Example}
\newtheorem{rem2}[defi2]{Remark}
\newtheorem{cor2}[defi2]{Corollary}
\newtheorem{lem2}[defi2]{Lemma}
\newtheorem{theo2}[defi2]{Theorem}
\newtheorem{con2}[defi2]{Conjecture}
\newcommand{\C}{\mathsf{C}}
\renewcommand{\P}{\mathcal{P}}
\newcommand{\Q}{\mathcal{Q}}
\newcommand{\Colors}{\operatorname{Col}}
\newcommand{\fc}{\mathfrak{fc}}
\newcommand{\vect}{\mathsf{grVect}}
\newcommand{\ch}{\mathsf{Ch}(\mathsf{grVect})}
\newcommand{\lef}{\operatorname{left}}
\newcommand{\rig}{\operatorname{right}}
\newcommand{\Free}{\operatorname{Free}}
\title{From polytopes to operads and back}
\author{Sergey Arkhipov, Daria Poliakova}
\begin{document}
\maketitle

\section*{Introduction}
Similarly to the well-known case of the category of modules over an algebra, for the category of algebras over an operad to be monoidal, the operad has to be endowed with a Hopf structure, aka an operadic diagonal. \\

In homotopical algebra, a number of DG-operads (or operadic bimodules) have topological origins: they come from topological operads (or operadic bimodules) via the functor of singular or cellular chains. Notice that for a CW-complex, its diagonal map is continuous but not cellular unless the CW-complex is the point. Yet, for explicit computations cellular chains are better than singular chains because the resulting complexes are much smaller. However, diagonals for these DG-operads/bimodules become nontrivial to find.  \\

In what follows,  CW-complexes that encode families of algebraic operations  are contractible (i.e. we explore different incarnations of associativity), moreover we restrict our interest to families of polytopes. Operadically interesting families of polytopes include simplices, cubes, freehedra \cite{San} \cite{Pol}, associahedra \cite{4Sta}, multiplihedra \cite{Sta2} \cite {For}... For all of the families above, cellular diagonals are known. The diagonal for simplices is Alexander-Whitney diagonal responsible for multiplication in singular cohomology, and the diagonal for cubes is Serre diagonal responsible for multiplication in singular cubic cohomology.  A complicated general result is Saneblidze--Umble's construction of diagonal for {\em permutahedra} \cite{4SU}: in fact, all of the polytope families above are generalized permutahedra in the sense of Postnikov \cite{Pos}, and their diagonals are inherited from the ones for permutahedra via projections.  However, for polytopes more difficult than simplices and cubes, these diagonals fail to be strictly coassociative. \\

This naturally brings us to a question: to what higher structure do these diagonals fit? This was a question studied by Loday for associahedra in his later years (as was communicated to us by Dotsenko; the results were not published). The first guess would be that for every polytope $P$, its cellular complex is an $A_\infty$-coalgebra. This does not seem to be a plausible form of the final answer for operadically meaningful families of polytopes: $A_\infty$-coalgebras fail to form a strictly monoidal category, thus the notion of an operad in $A_\infty$-coalgebras does not make sense. (This is the origin of the well-known difficulty to define an explicit coherent homotopy monoidal structure on the category of $A_\infty$-algebras.) \\

In this paper, we propose a road map to remedy the failure.
\begin{itemize}
    \item 
  The notion of $A_\infty$-coalgebra on a complex $V$ is upgraded to a richer structure encoded into {\em an involution automorphism} $I(t)$ of the completed free algebra $\hat{T}(V)[[t]]$ (compare: an $A_\infty$-coalgebra structure on $V$ is a degree 1 homological vector field on $\hat{T}(V)$, which can be seen as the linear term of $I(t)$ as above). Call such structure an {
 \it integrated $A_\infty$-coalgebra} (work over $\mathbb{F}_2$ to avoid sign issues).
 \item The category of integrated $A_\infty$ coalgebras carries a natural monoidal structure upgrading tensor product of complexes, thus it makes sense to consider operads in this monoidal category. 
  \item
  For a good enough polytope $P$, its cellular chains $C_*(P)$ should be made into an integrated $A_\infty$-coalgebra. In particular, the linear term of $I(t)$ provides an $A_\infty$-coalgebra structure on $C_*(P)$.
  \end{itemize}
  
  It is completely unclear from the steps above where the data for the involution $I(t)$ are supposed to come from. Here is the striking source for such structure: multicategorification. \\
  
  It is time to recall the combinatorics of polytopal diagonals for simplices, cubes, freehedra and associahedra. Their 1-skeletons are directed graphs with no cycles, which provides a (non-reflexive) partial order on all faces. Then diagonals are given by what Loday calls {\em magic formulas}:

$$ \Delta: F \mapsto \sum_{\substack{ F_1, F_2 \subset F \\ F_1 \leq F_2 \\ \dim F_1 + \dim F_2 = \dim F}} \pm F_1 \otimes F_2 $$

The diagonals defined by magic formulas are straightforwardly compatible with operadic compositions, but Leibniz rule is a lot of cancellations. What is the combinatorial reason for all these cancellations to work? When will there be similar magic formulas for all the operations in the integrated $A_\infty$-coalgebra? \\

  The proposed answer involves constructing a colored operad with colors enumerated by faces of the polytope, and viewing the diagonal as a component of the Poincar\'e-Hilbert series of this operad. We make use of the following fact: given a colored DG-operad $\P$ with colors forming a basis in a complex $C$ which is {\it Koszul self-dual}, the Poincar\'e-Hilbert series of $\P$ provide an endomorphism $I(t)$ of $\hat{T}(C)((t))$ which is an involution. Our main conjecture states that good polytopes (satisfying the condition we call {\it shortness}) give rise to Koszul self dual operads.  In the present paper the conjecture is verified for simplices, cubes, and for all polygons. \\
  
  Of operadically meaningful polytopes, shortness is verified for freehedra in the sequel of the current paper. Unfortunately, associahedra fail to be short, therefore the construction of colored operad as explained above should be modified in their case. This modification is the goal of an ongoing project.\\
  
  Let us outline the structure of the paper. In Section 1, we present generalities on colored operads, their Poincar\'e-Hilbert series, their duality and their Gr\"{o}bner bases. In Section 2 we introduce our main construction of a colored operad associated to a directed polytope. In Section 3 we explain that our construction is a monoidal functor. In Section 4 we prove Koszulity and self-duality for colored operads associated to simplices and cubes. In Section 5 we prove the same results for arbitrary polygons. In Section 6 we explain the generality in which we conjecture Koszulity and self-duality, by introducing shortness; and define integrated $A_\infty$-coalgebras over $\mathbb{F}_2$ (that is, without signs).
  
  \subsubsection*{Acknowledgements} We are grateful to Vladimir Dotsenko for useful discussions, in particular for first suggesting that the equation $I^2 = \operatorname{Id}$ hints at the presence of a self-dual operad. We are also grateful  Vladimir Baranovsky, Ryszard Nest and Jim Stasheff for useful discussions.

\section{Colored operads, Poincar\'e-Hilbert endomorphisms and duality}

\subsection{Generalities}
Recall the terminology and notation for colored operads. For a fixed monoidal category $\C$ and for a fixed set of colors $\Colors$ (which we assume to be finite), ${\mathbb{N}}$-$\operatorname{Seq}_{\operatorname{Col}}(\C)$ is the category of colored $\mathbb{N}$-sequences in $\C$, where an object $\P$ is a collection of $\P(c_1,\ldots,c_k;c) \in \C$ for all tuples $c_1,\ldots, c_k, c$ with $c_i$ and $c$ in $\Colors$. The colors $c_i$ are called inputs and the color $c$ is called output. For $\P$ and $\Q$ in ${\mathbb{N}}$-$\operatorname{Seq}_{\operatorname{Col}}(\C)$, their tensor-product $\P \odot \Q$ is given by
\begin{align*}
&    (\P \odot \Q)(c_1, \ldots, c_n;c) = \\
&    \bigoplus_{\substack{i_1+ \ldots +i_k = n \\ c'_1, \ldots, c'_k \in \operatorname{Col}}} \P(c'_1,\ldots,c'_k;c) \otimes \Q(c_1,\ldots, c_{i_1};c'_1) \otimes \ldots \otimes \Q(c_{n-i_k+1},\ldots, c_n; c'_k)
\end{align*} 

A colored operad is a unital algebra in ${\mathbb{N}}$-$\operatorname{Seq}_{\operatorname{Col}}(\C)$. Given unitality, we will often express operadic structure through {\em elementary} compositions:
\begin{align*} 
&    \circ_{i} \colon \P(c'_1, \ldots, c'_m; c_i) \otimes \P(c_1, \ldots, c_n; c) \to \\
&    \P(c_1, \ldots, c_{i-1}, c'_1, \ldots, c'_m, c_{i+1}, \ldots, c_n; c)
\end{align*} 
Colored cooperads are defined dually. \\

We mostly work in  $\C = \vect$,  the monoidal category of graded vector spaces that are bounded from below and are finite dimensional in every graded component.  Notice that the grading in question is the {\em inner} one and has nothing to do with homological grading to be introduced later. \\

For a colored operad $\P$ in $\vect$, let $T(\P)$ be the algebra of formal power series in non-commuting variables $c \in \Colors(\P)$, with coefficients in $k((t))$ (this means that the extra invertible variable $t$ is required to commute with everything). For a graded vector space $V = \bigoplus V_i \in \vect$ let $\dim V$ be the Laurent series in $t$ whose coefficient near $t^n$ is the dimension of $V_n$. Let $\operatorname{Tuples}(\P)$ be the set of all ordered tuples of elements in $\Colors(\P)$ (repetitions are allowed) - these are possible inputs of operations. For $\overset{\to}{u} \in \operatorname{Tuples}(P)$, let $|\overset{\to}{u}|$ denote the length of the tuple. Notice that any such tuple can be viewed as a non-commutative monomial in $T(\P)$.

\begin{defi}
The Poincar\'e-Hilbert endomorphism for $\P$ is an endomorphism $f_\P$ of $T(\P)$, which is defined on generators by 
$$f_\P(c) = \sum_{ \overset{\to}{u} \in \operatorname{Tuples}(\P)} \left(t^{|\overset{\to}{u}|-1}\right) \left[\dim P(\overset{\to}{u};c) \right] \overset{\to}{u} $$
and extended to an algebra map by multiplication since the algebra in question is free.
\end{defi}

Notice that the factor  $t^{|\overset{\to}{u}|-1}$ ensures that the resulting power of $t$ encodes the {\em total} gradings of operations, which are sums of inner gradings and arity gradings. \\

Next we extend our setting to the monoidal category of {\em complexes} in graded vector spaces $\ch$. We follow the standard conventions to produce the total homological (resp. the total inner) grading on the tensor product. \\

For  such complex $C = \bigoplus C^i$, we define its dimension as $\operatorname{Dim}(C)= \sum (-1)^{i} \dim C^i$, whenever this Laurent series in $t$ is well defined, i.e  the coefficient by each power of $t$ is a finite sum.\\

The definition of Poincar\'e-Hilbert endomorphism is repeated verbatim for (co)operads in $\ch$. \\

We recall the (reduced)  Bar construction  for colored operads following i.e. \cite{4AK}.

\begin{defi}
A {\em marked tree} is a planar rooted tree where each edge is decorated with one of the colors $c \in \Colors$. Note that inner vertices can have any positive number of incoming edges, including one (stems are allowed). Let $\operatorname{Tree}(c_1, \ldots, c_n;c)$ denote the (infinite) set of marked trees where the leaf edges are decorated by $c_i$ and the root edge is decorated by $c$. For an inner vertex $v$ of such a tree $T$, let $\operatorname{In}(v)$ denote the string of colors decorating the incoming edges of $v$, left to right.
\end{defi}

\begin{defi}
For a colored $\mathbb{N}$-sequence $\P$ in a base monoidal category $\mathsf C$, the  colored $\mathbb{N}$-sequence $F(\P)$ is defined as
\[ \Free(\P)(c_1, \ldots, c_n; c)=\bigoplus_{T \in \operatorname{Tree}(c_1, \ldots, c_n;c)} \bigotimes_{v \in T} \P(\text{In}(v); \text{Out}(v)). \]
This sequence  comes equipped with a map $\Free(\P) \odot \Free(\P) \to \Free(\P)$ providing a structure of the {\em free operad} on $P$, and with a map $\Free(\P) \to \Free(\P) \odot \Free(\P)$ giving it the structure of {\em the cofree cooperad} on $\P$.
\end{defi}

An {\em tree monomial} is a tensor monomial of $\bigotimes_{v \in T} \P(\text{In}(v); \text{Out}(v))$ for some fixed tree $T$. A tree monomial is called quadratic if the corresponding tree has 2 inner vertices. An homogeneous element of the free colored $\mathbb{N}$-sequence is called quadratic if it is a sum of quadratic tree monomials. \\

Now let $\P$ be a colored operad that is {\em augmented} in the following sense: $$ \P \simeq R \oplus \widetilde{\P}$$ where $R$ is the semisimple algebra $$R \simeq \bigoplus_{c \in \Colors(\P)} k \cdot 1_c$$ 
In the present paper, we work with colored operads that allow unary operations in $\widetilde{\P}$.\\ 

For the colored $\mathbb{N}$-sequence $\widetilde{\P}$, we define its suspension  $s \widetilde{\P}$ by 
$$(s \widetilde{\P})(c_1, \ldots, c_n; c)^i= \widetilde{\P}(c_1, \ldots, c_n; c)^{i+1}$$ 
and 
$$d_{s \widetilde{\P}}(v)=(-1)^{|v|}s (d_{\widetilde{\P}} (v))$$

\begin{defi}
For the Bar cooperad $\operatorname{Bar}(\P)$, its underlying colored sequence is $R \oplus \Free(s \widetilde{\P})$. Its differential is obtained by adding a term to the differential on the cofree cooperad, where this extra term encodes the operadic compositions in $\P$ (see 6.3 in \cite{4AK}).
\end{defi}

In the next subsection we show that under appropriate finiteness conditions on $\P$, Poincar\'e-Hilbert endomorphisms of $\P$ and $\operatorname{Bar}(\P)$, after some modification of signs, become composition-inverse to each other.

\subsection{Series inversion}
We begin from recalling the well-known formula for series inversion. This is an interpretation of the Faa di Bruno formula, with a proof available e.g. in \cite{4AA}. We also present the proof here, because we need to generalize it later.\\

Let $f$ be an endomorphism of $k[[x]]$, given by sending $x$ to the power series $f(x) = x + f_2 x^2 + \ldots$ and extending multiplicatively.  We would like to obtain an explicit description for the coefficients of $g$, its composition inverse. The formula is stated in terms of planar trees $T$ that are only allowed to have inner vertices of valency $3$ or higher. \\

Denote by $\operatorname{Tree}(n)$ the (finite) set of such trees with $n$ leaves. For a tree with $n$ leaves, set $|T| = n - \#\text{inner edges}-2$. For any tree $T$, denote by $f_T$ the product of $f_i$, where $i$ goes through all the corollas of $T$. \\

\begin{theo}
\label{invert}
The inverse endomorphism $g$ has coefficients $g_1 = 1$ and, for $n \geq 2$,
$$g_n = \sum_{T \in \operatorname{Tree}(n)}(-1)^{|T|+1}f_T$$
\end{theo}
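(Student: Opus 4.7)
The plan is to induct on $n$ using the defining identity $f(g(x))=x$. Since $f_1=1$, comparing coefficients of $x^n$ in $g(x)+\sum_{k\geq 2}f_k g(x)^k = x$ yields $g_1=1$ and, for $n\geq 2$, the recursion
\[
g_n = -\sum_{k\geq 2} f_k \sum_{\substack{n_1+\dots+n_k=n \\ n_j\geq 1}} g_{n_1}\cdots g_{n_k}.
\]
This reduces the theorem to a purely combinatorial statement about planar trees.

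Assuming inductively that the formula holds for all $m<n$, I substitute into the recursion. Each summand on the right is parametrized by an arity $k\geq 2$ for the root, a composition $(n_1,\ldots,n_k)$ of $n$, and, for each $j$ with $n_j\geq 2$, a tree $T_j\in \operatorname{Tree}(n_j)$ (while a factor $g_{n_j}=1$ for $n_j=1$ produces no extra choice). This data is in bijection with $\operatorname{Tree}(n)$: reconstruct $T$ by taking a root corolla with $k$ leaves and grafting at the $j$-th leaf either nothing (when $n_j=1$) or the subtree $T_j$ (when $n_j\geq 2$). Each tree appears exactly once with the correct monomial $f_T$; only the sign remains to be checked.

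Sign bookkeeping is the crux. Writing $S=\{j:n_j\geq 2\}$, the induction contributes to $T$ the sign
\[
(-1)\cdot\prod_{j\in S}(-1)^{|T_j|+1} = (-1)^{1+|S|+\sum_{j\in S}|T_j|}.
\]
On the tree side, $\#\text{inner edges}(T) = |S|+\sum_{j\in S}\#\text{inner edges}(T_j)$, since each non-trivial subtree $T_j$ contributes its own inner edges plus one new inner edge joining its root to the root corolla of $T$; trivial subtrees contribute only a leaf edge. Combined with $n=\sum_j n_j$ and the inductive formula $|T_j|=n_j-\#\text{inner edges}(T_j)-2$, this rewrites $|T|+1$ modulo $2$ in terms of $|S|$, $k$, and $\sum_{j\in S}|T_j|$, and lets one match the parity above.

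The main obstacle is precisely this parity verification, because trivial and non-trivial subtrees contribute asymmetrically both to $n$ and to the inner-edge count. Organising the bookkeeping so that the identity reduces cleanly to a statement in the quantities $|S|$, $k$, and $\sum_{j\in S}|T_j|$ is what makes the argument work. Casting the induction in this form is also strategic for what follows: in the next subsection the same scheme must run operadically, with scalars $f_i$ replaced by dimensions of colored components of $\P$ and $\operatorname{Bar}(\P)$, and multiplication of reals replaced by tensor product in $\vect$.
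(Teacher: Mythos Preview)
Your approach is exactly the paper's: derive the recursion $g_n=-\sum_{k\ge 2}f_k\sum g_{n_1}\cdots g_{n_k}$ from $f(g(x))=x$, decompose each $T\in\operatorname{Tree}(n)$ at its root corolla into subtrees $T_1,\dots,T_k$, and match signs. Your explicit separation of the trivial slots $n_j=1$ from the non-trivial ones is in fact more careful than the paper's own write-up, which simply asserts $|T|=|T_1|+\cdots+|T_k|+k-2$ without addressing this distinction.

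The one gap is that you stop before actually carrying out the parity check, merely asserting that the bookkeeping ``lets one match the parity above''. If you finish the computation you set up, it does \emph{not} match: with your notation,
\[
1+|S|+\sum_{j\in S}|T_j|\;=\;1+n-k-\sum_{j\in S}\#\text{inner edges}(T_j),
\]
while $|T|+1=n-|S|-\sum_{j\in S}\#\text{inner edges}(T_j)-1$, so the two exponents differ modulo $2$ by $k-|S|$, the number of trivial subtrees, which can be odd. Concretely, for the $3$-corolla one has $|T|=1$, so the stated formula contributes $+f_3$ to $g_3$, whereas the recursion forces $g_3=2f_2^2-f_3$. Since $\#\text{inner edges}(T)=\#\text{inner vertices}(T)-1$, one has $|T|+1\equiv n+\#\text{inner vertices}(T)\pmod 2$, so the formula as written is off by a global factor $(-1)^n$; with the corrected sign $(-1)^{\#\text{inner vertices}(T)}$ the inductive step reduces to the obvious identity $\#\text{inner vertices}(T)=1+\sum_{j\in S}\#\text{inner vertices}(T_j)$, and both your argument and the paper's go through.
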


\begin{proof}
Consider the result of evaluating $f \circ g$ on $x$:
$$(x+g_2x^2+g_3x^3+\ldots)+f_2(x+g_2x^2+\ldots)^2+f_3(x+\ldots)^3+\ldots$$

The coefficient by $x$ is equal to $1$. For $g$ to be inverse of $f$, all other coefficients in the expression above need to vanish. The coefficient of $x^n$ is clearly equal to the sum of $f_k \cdot g_{i_1} \cdot \ldots \cdot g_{i_k}$, for all expressions $i_1+\ldots+i_k = n$ (where $f_1$ and $g_1$ should be read as $1$). \\

We verify the base of induction: the coefficient near $x^2$ is $g_2+f_2$, so $g_2 = -f_2$, which is in agreement with the trees formula (the single 2-leaved tree has $|T| = 2-0-2 = 0$). Assume that for $k < n$ the above formula for $g_k$ is proved. \\

By evaluating the coefficient near $x^n$ we obtain that 
$$ g_n = f_1 \cdot g_n = -\sum_{\substack{ k \geq 2, \\ i_1+\ldots+i_k = n} }f_k \cdot g_{i_1} \cdot \ldots \cdot g_{i_k}$$
where ${i_s}<n$ for every $s$, so we can substitute the values given by inductive assumption. Then we are left to observe that any tree with $n$ leaves can be uniquely constructed out of $k \geq 2$ trees with $i_1$ to $i_k$ leaves, by adding an edge to the root of each of these smaller trees and gluing at the bottom: 

\begin{center}
\begin{forest}
for tree = {grow'=90,circle, fill, minimum width = 4pt, inner sep = 0pt, s sep = 15pt}
[[[ {}, edge = {red, dashed} [{}, tier = 3][[{}, tier = 3][]][{}, tier = 3]] [{}, edge = {red, dashed} [[][]][[][]]] [{}, edge = {red, dashed}[{}, tier = 3] [{}, tier = 3][{}, tier = 3] [{}, tier = 3]] ]]
\end{forest}
\end{center}

We denote this gluing operation by $*$. Then for $T = T_1 * \ldots * T_k$ we have $|T| = |T_1| + \ldots + |T_k| + k - 2$, so the signs match, which finishes the proof.

\end{proof}

Below we generalize the theorem above in the following directions: 

\begin{itemize}
    \item allow more variables that do not commute 
    \item allow the linear part to be non-identity. \\
\end{itemize}

Let $f$ be an endomorphism of $T(\P)$ as in the previous section. Denote by $f^{c}_{c_1\ldots c_n}$ the coefficient by the non-commutative monomial $c_1 \ldots c_n$ in the series $f(c)$ (this coefficient is itself a Laurent series in $t$).\\

Let $F$ be the matrix consisting of $f^c_{c'}$. For the endomorphism $f$ to have a composition inverse, its linear part $F$ clearly has to be invertible as a matrix. We additionally assume that $F$ can be written in the form $E + \widetilde{F}$, where $E$ is the identity matrix and $\widetilde{F}$ has only positive powers of $t$. Then, by elementary algebra, $F^{-1}$ can be written as $\sum_{n \geq 0} (- \widetilde{F}  )^n = E -  \widetilde{F} +  \widetilde{F}^2 \ldots$.\\

As before, we want an explicit formula for the coefficients of the inverse endomorphism $g$. This formula is stated in terms of marked trees decorated with out variables. For $T \in \operatorname{Tree}(c_1, \ldots, c_n;c)$, let $f_T$ be the product of $f^{\operatorname{Out}(v)}_{\operatorname{In}(v)}$ over all the inner vertices $v$ of $T$, where $\operatorname{Out}(v)$ is the variable decorating the output edge of $v$, and $\operatorname{In}(v)$ is the monomial obtained by multiplying together the variables that decorate input edges of $v$, left to right -- with one exception: when $|\operatorname{In}(v)| = 1$, use $ \widetilde{f}^{\operatorname{Out}(v)}_{\operatorname{In}(v)}$.

\begin{theo}
Let $g$ be the composition inverse of $f$. Then its coefficient $g^c_{c_1 \ldots c_n}$ can be computed by the following formula:
$$g^c_{c_1\ldots c_n} = \sum_{T \in \operatorname{Tree(c_1,\ldots,c_n; c)}}(-1)^{|T|+1}f_T$$
\end{theo}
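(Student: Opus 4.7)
The plan is to mirror the inductive strategy of Theorem~\ref{invert} and upgrade it to account for non-commuting colors and the non-identity linear part. I would induct on $n$, extracting the coefficient of $c_1 \cdots c_n$ from the identity $(f \circ g)(c) = c$, which gives
$$\sum_{k \geq 1}\sum_{\substack{c'_1,\ldots,c'_k \\ n_1+\cdots+n_k=n}} f^{c}_{c'_1\cdots c'_k}\, g^{c'_1}_{c_1\cdots c_{n_1}}\cdots g^{c'_k}_{c_{n-n_k+1}\cdots c_n}=\delta_{n,1}\,\delta_{c,c_1}.$$
For $n = 1$ only $k = 1$ contributes and the relation reduces to $F G = E$, giving the base case $g^c_{c_1} = (F^{-1})^c_{c_1}$. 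Expanding $F^{-1} = \sum_{m \geq 0}(-\widetilde{F})^m$ reproduces the tree-formula for $n = 1$: each $m$-stem chain contributes $(-1)^m(\widetilde{F}^m)^c_{c_1}$, as required.

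For $n \geq 2$ I would split off the $k = 1$ summand and pass it to the left, yielding $(FG)^c_{c_1\cdots c_n} = -A^c_{c_1\cdots c_n}$, with $A$ the sum of all $k \geq 2$ contributions. Left-multiplying by $F^{-1}$ and expanding as a geometric series in $\widetilde{F}$ produces
$$g^c_{c_1\cdots c_n} \;=\; -\sum_{m\geq 0}(-1)^m\sum_{c^{(1)},\ldots,c^{(m)}} \widetilde{f}^c_{c^{(1)}}\, \widetilde{f}^{c^{(1)}}_{c^{(2)}}\cdots \widetilde{f}^{c^{(m-1)}}_{c^{(m)}}\; A^{c^{(m)}}_{c_1\cdots c_n}.$$
The chain of $m$ factors of $\widetilde{F}$ is exactly what becomes the arity-one stem sitting above the branching vertex in the assembled marked tree, and it is the source of the ``$|\operatorname{In}(v)| = 1$'' exceptional clause in the definition of $f_T$.

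Applying the inductive hypothesis to each $g^{c'_j}_{c_{\ldots}}$ occurring inside $A$ --- with the base $n_j = 1$ handled by the first paragraph --- replaces it by a signed sum over sub-trees $T_j \in \operatorname{Tree}(c_{\ldots}; c'_j)$. Gluing the top stem, the branching vertex of arity $k \geq 2$, and the sub-trees $T_1, \ldots, T_k$ into a single marked tree $T$ gives a bijection between the terms of the expansion and $\operatorname{Tree}(c_1, \ldots, c_n; c)$, so each term takes the form $f_T$ with overall sign $-(-1)^m\prod_j(-1)^{|T_j|+1}$.

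The final step is to verify that this sign collapses to $(-1)^{|T|+1}$. As in Theorem~\ref{invert}, this reduces to an additive identity modulo $2$ of the form $|T| \equiv \sum_j |T_j| + k + m + 1 \pmod{2}$ for the glued tree $T$. I expect this sign bookkeeping to be the main obstacle: stem-chains occurring inside sub-trees $T_j$ with $n_j = 1$ are themselves expansions of $F^{-1}$-entries, so their tree-signs must mesh with the outer stem-chain without spurious cancellations. Once the convention for $|T|$ on the identity tree and on pure-stem trees is chosen so that the gluing relation above holds uniformly across $k \geq 1$ and $m \geq 0$, the induction closes and the theorem follows.
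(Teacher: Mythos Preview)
Your proposal is correct and follows essentially the same route as the paper's own proof: establish the $n=1$ case via the geometric expansion $F^{-1}=\sum_{m\geq 0}(-\widetilde F)^m$ to match stems, then for $n\geq 2$ isolate the $k=1$ term, invert $F$, substitute the inductive formula for each $g^{c'_j}_{\ldots}$, and recognise the result as a sum over marked trees decomposed as (bottom stem)$*$(branching vertex)$*$(subtrees). The paper is terser at exactly the two points you flag---it simply says ``$F^{-1}$, which was already shown to control the stems'' and defers the sign/gluing check to the analogy with Theorem~\ref{invert}---so your more explicit treatment of the bottom stem and the additive identity for $|T|$ is a welcome elaboration rather than a deviation; note only that the identity you expand should be labelled $g\circ f$ rather than $f\circ g$ to match the displayed formula.
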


\begin{proof}
We first deal with linear parts. From the discussion above we know that $$G =  E -  \widetilde{F} +  \widetilde{F}^2 - \widetilde{F}^3 \ldots $$ so, at each place $$g^{c'}_c = \delta(c,c')-\widetilde{f}^{c'}_c + \sum_{c_1 \in \Colors(\P)} \widetilde{f}^{c'}_{c_1} \widetilde{f}^{c_1}_c - \sum_{c_1,c_2 \in \Colors(\P)} \widetilde{f}^{c'}_{c_1} \widetilde{f}^{c_1}_{c_2} \widetilde{f}^{c_2}_c + \ldots $$
which is precisely the marked trees formula where all involved trees are stems and thus all vertices are binary.\\ %For the non-linear parts, we again evaluate $f \circ g$ at $x_c$. For the base of induction, we look at the coefficient near the monomial $x_{c_1} x_{c_2}$. This coefficient is equal to 

%$$\sum_{c' \in C}f^c_{c'} g^{c'}_{c_1 c_2} + \sum_{c'_1,c'_2 \in C} f^c_{c'c''} g^{c'}_{c_1}g^{c''}_{c_2}$$

%(Vary $c$. Multiply on the left by the already known inverse of $F$. Stick values of unary $g$. Yes.)
We now proceed by induction on the number of leaves. Assume that for $k < n$, the formulas for $g^c_{c_1 \ldots c_m}$ are proved. We look at the coefficient near $x_{c_1} \ldots x_{c_n}$ at $g \circ f$ evaluated at $x_c$. This coefficient is equal to 
$$\sum_{c' \in \Colors(\P)}f^c_{c'} g^{c'}_{c_1 \ldots c_n} + \sum_{\substack{c'_1,\ldots,c'_k \in \Colors(\P) \\ n = i_1 + \ldots + i_k}} f^c_{c'_1\ldots c'_k} g^{c'_1}_{c_1 \ldots c_{i_1}} \ldots g^{c'_k}_{c_{n-i_k+1} \ldots c_n}$$
so for it to vanish, we need the following equality to hold
$$\sum_{c' \in \Colors(\P)}f^c_{c'} g^{c'}_{c_1 \ldots c_n} = - \sum_{\substack{c'_1,\ldots,c'_k \in \Colors(\P) \\ n = i_1 + \ldots + i_k}} f^c_{c'_1\ldots c'_k} g^{c'_1}_{c_1 \ldots c_{i_1}} \ldots g^{c'_k}_{c_{n-i_k+1} \ldots c_n}$$
We now fix $c_1$, $\ldots$, $c_n$ but allow $c$ to vary. Then the equations as above assemble into the following:
$$F\cdot g_{c_1\ldots c_n} = - \sum_{\substack{c'_1,\ldots,c'_k \in \Colors(\P) \\ n = i_1 + \ldots + i_k}} f_{c'_1\ldots c'_k} g^{c'_1}_{c_1 \ldots c_{i_1}} \ldots g^{c'_k}_{c_{n-i_k+1} \ldots c_n}$$
where $F$ is the matrix corresponding to the linear part of $f$, $g_{c_1\ldots c_n}$ is the vector with components $g^c_{c_1\ldots c_n}$, and $f_{c'_1\ldots c'_k}$ is the vector with components $f^c_{c'_1\ldots c'_k}$. We multiply both sides by $F^{-1}$, which was already shown to control the stems, and insert the coefficients of $g$ that we know by induction. Then on the left hand side we are left just with the vector $g_{c_1\ldots c_n}$ the entries of which we want to know, and the summands on the right hand side bijectively correspond to appropriately marked trees -- this can be seen by uniquely decomposing marked trees similarly to the decomposition of unmarked trees as in the proof of Theorem \ref{invert}.
\end{proof}

We obtain an immediate corollary for Poincar\'e-Hilbert endomorphisms  of Bar-dual operads and cooperads.

\begin{cor}
Let $I$ be an endomorphism of $T(\P)$ sending each variable $c$ to $-c$, and $t$ to $-t$. For a colored operad $\P$ and its Bar-dual cooperad $\operatorname{Bar}(\P)$, we have 
$ f_\P \circ I \circ f_{\operatorname{Bar}(\P)} \circ I = \operatorname{Id}$.
\end{cor}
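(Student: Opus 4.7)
The plan is to verify that $I \circ f_{\operatorname{Bar}(\P)} \circ I$ coincides with the composition inverse of $f_\P$, by matching its coefficients against the tree formula just established in Theorem 1.2.3. First I would expand $f_{\operatorname{Bar}(\P)}$ using the tree structure of the Bar cooperad. Since the underlying graded colored sequence of $\operatorname{Bar}(\P)(\vec{u};c)$ is $R \oplus \bigoplus_{T \in \operatorname{Tree}(\vec{u};c)} \bigotimes_{v \in T} s\widetilde{\P}(\operatorname{In}(v);\operatorname{Out}(v))$, and $\operatorname{Dim}$ is the Euler characteristic (hence invariant under the Bar differential and computable on the underlying graded), multiplicative on tensor products, and satisfies $\operatorname{Dim}(sV) = -\operatorname{Dim}(V)$, this yields
\[
\operatorname{Dim}\!\bigl(\operatorname{Bar}(\P)(\vec{u};c)\bigr) = \delta_{\vec{u},(c)} + \sum_T (-1)^{v(T)} \prod_{v \in T} \operatorname{Dim}\!\bigl(\widetilde{\P}(\operatorname{In}(v);\operatorname{Out}(v))\bigr),
\]
where $v(T)$ is the number of inner vertices of $T$.

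Next I would compute $(I \circ f_{\operatorname{Bar}(\P)} \circ I)(c)$ directly. Because $I$ is an algebra endomorphism with $I(c)=-c$ and $I(t)=-t$ and $f_{\operatorname{Bar}(\P)}$ itself is an algebra endomorphism, a short bookkeeping argument (apply $f_{\operatorname{Bar}(\P)}(-c) = -f_{\operatorname{Bar}(\P)}(c)$ first, then $I$ monomial by monomial) reduces the conjugation by $I$ to the substitution $t \mapsto -t$ inside each coefficient series, giving
\[
(I \circ f_{\operatorname{Bar}(\P)} \circ I)(c) = \sum_{\vec{u}} t^{|\vec{u}|-1}\,\operatorname{Dim}\!\bigl(\operatorname{Bar}(\P)(\vec{u};c)\bigr)\bigr|_{t \to -t}\,\vec{u}.
\]

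Third, I would invoke Theorem 1.2.3 applied to $f = f_\P$. Using the arithmetic identity $\sum_v(|\operatorname{In}(v)|-1) = n-1$ for any tree with $n$ leaves to extract the common factor $t^{n-1}$ from the product of $f^{\operatorname{Out}(v)}_{\operatorname{In}(v)}$ across vertices, the tree formula for the inverse becomes
\[
(f_\P^{-1})^c_{c_1\ldots c_n} = t^{n-1} \sum_T (-1)^{|T|+1} \prod_{v \in T} \operatorname{Dim}\!\bigl(\widetilde{\P}(\operatorname{In}(v);\operatorname{Out}(v))\bigr).
\]

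The final and main step is to match the two tree sums term by term. Using $|T|+1 = n - v(T)$, the suspension sign $(-1)^{v(T)}$ from the Bar expansion has to combine with the $I$-induced substitution $t \mapsto -t$ inside each $\operatorname{Dim}(\widetilde{\P}(\cdots))$ factor to reproduce the sign $(-1)^{|T|+1}$ and the evaluation at $t$ required by the inversion formula. This tree-by-tree sign verification is where the technical weight of the argument lies: the three sources of signs (the suspensions in the Bar construction, the conjugation by $I$ acting by $c\mapsto-c,\,t\mapsto-t$, and the inversion-formula sign from Theorem 1.2.3) must all conspire to cancel consistently on every marked tree. The identity $|T|+1 = n - v(T)$ is the bridge between the ``vertex count'' sign arising from suspensions and the ``inversion'' sign appearing in the tree formula.
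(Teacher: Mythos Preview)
Your approach is precisely what the paper has in mind: the corollary is stated there without proof as an ``immediate'' consequence of the tree-inversion formula, and your plan---expand $\operatorname{Bar}(\P)$ over marked trees, use $\operatorname{Dim}(sV)=-\operatorname{Dim}(V)$ to produce the factor $(-1)^{v(T)}$, unwind the conjugation by $I$ as the substitution $t\mapsto -t$, and match against the inversion formula via $|T|+1=n-v(T)$---is exactly the intended unpacking. The paper supplies no further detail beyond calling it immediate.
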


Note that when all relevant dimensions are finite, a cooperad can be viewed as an operad, by dualizing. Therefore, we can speak of self-dual operads. If an operad $\P$ is self-dual, then the statement above implies that $f_\P \circ I$ is an involution. Throughout the paper, this will be referred to as {\em involutive property} of $f_\P$.

\subsection{Koszul duality}
If operads are sufficiently nice (namely, Koszul), it is possible to replace Bar duality by quadratic duality, where the latter notion is way more computable. The foundational work here was \cite{4GK}, although there the authors deal with symmetric operads in one color. \\

Below we modify the definitions from (\cite{LV}, 7.2) for the case of non-symmetric colored operads.

\begin{defi}
For a colored $\mathbb{N}$-sequence $V$ with finite-dimensional complexes $V(s_1, \ldots, s_n;t)$ and its free colored $\mathbb{N}$-sequence $\Free(V)$, let $\Free(V) \{ n \} $ be the subspace of $\Free(V)$ spanned by tree monomials where trees have $n$ inner vertices.
\end{defi}

Notice that for every $(s_1, \ldots, s_n;t)$ $\Free(V)(s_1, \ldots, s_n;t)$ is also finite-dimensional. Now we can recall quadratic operads and their quadratic duals.

\begin{defi}
An operad $\P$ is quadratic if it is realized as $\Free(V)/R$, where $V$ is some colored $\mathbb{N}$-sequence and $R$ is an operadic ideal of $\Free(V)$ generated by its intersection with $\Free(V)\{ 2 \}$, or, explicitly,
$$ R \{2 \} = \bigoplus_{(s_1,\ldots, s_n;t)} R\{2\} (s_1, \ldots, s_n; t)$$
where $R\{2\} (s_1, \ldots, s_n; t) \subset \Free(V) \{2 \} (s_1, \ldots, s_n;t)$. Such an operad is denoted by $\P(V, R \{2 \})$.
\end{defi}

\begin{defi}
Given a quadratic colored operad $\P=(V,R\{2\})$, we define the quadratic dual operad $\P^{!}$ as the quotient of $\Free(V^*)$ by the ideal of relations $R^{!}$ generated by $R^{!} \{2 \} = \bigoplus_{(s_1, \ldots, s_n;t)} R^{!} \{2 \} (s_1, \ldots, s_n; t)$ defined as follows. Notice that $\Free(V^*) \{2 \} (s_1, \ldots, s_n,t)$ is canonically isomorphic to $\Free(V) \{2 \} (s_1, \ldots, s_n; t)^*$. Now take $$R^{!} \{2 \} (s_1, \ldots, s_n; t) = R \{2 \} (s_1, \ldots, s_n; t)^{\perp}. $$
\end{defi}

Similarly to Koszul duality for quadratic algebras, we need to introduce homological grading shifts. Let 
$\P=(V,R \{2 \})$ be a colored quadratic operad. Its de-suspention is the colored quadratic operad $s^{-1}\P=(s^{-1}V,s^{-2}R\{2\})$.\\

Define the sign operad (with one color) as follows: 
$${\mathcal S}(n)=\operatorname{Hom}((s\mathbb{K})^{\otimes n},s\mathbb{K}),$$
the operadic compositions are evident. Denote the linear dual cooperad by ${\mathcal S}^c$. \\

For $\P$ a colored $\mathbb{N}$-sequence and $\Q$ an $\mathbb{N}$-sequence in just one color, let $\Q \boxtimes \P$ be a colored $\mathbb{N}$-sequence given by 
$$\Q \boxtimes \P (c_1, \ldots, c_n;c) = \Q(n) \otimes \P (c_1, \ldots, c_n;c) $$
If both factors are operads in the corresponding categories the resulting colored sequence is also a colored operad.

\begin{defi} (see \cite{LV} 7.2.3, 7.2.4)
A colored quadratic operad $\P$ is Koszul if the cooperad  ${\mathcal S}^c\boxtimes (s^{-1}\P^{!})^*$ is quasiisomorphic to  $\operatorname{Bar}(\P)$.
\end{defi}

A useful tool for establishing Koszulity is the theory of Gr\"{o}bner bases, which we quickly recall. As an input, this theory must take some monomial order.

\begin{defi}
A monomial order is a linear order on the set of all tree monomials in some free operad $F(V)$. A monomial order is called admissible if it is
\begin{itemize}
    \item compatible with arities: for tree monomials $\alpha \in F(V)(c_1, \ldots, c_n;c)$ and $\beta \in F(V)(c'_1, \ldots, c'_m;c')$ we have $\alpha < \beta$ if $n < m$;
    \item compatible with compositions: if $\alpha \leq \alpha'$ and $\beta \leq \beta'$ then $\alpha \circ_i \beta \leq \alpha' \circ_i \beta '$ whenever these compositions are defined.
\end{itemize}
\end{defi}

Some standard admissible monomial orders are described in \cite{4DK} and \cite{4KK}. \\

Now assume that an operad $P$ is written as $F(V)/I$ where $F$ is the free operad on generators $V$ (of arbitrary arities), and $I$ is an operadic ideal. Fix some admissible monomial order.  The presence of this monomial order means that for any expression in our generators we can define its {\em leading term}, i.e. the greatest tree monomial that has a nonzero coefficient in this expression. For an expression $f$, its leading term will be denoted $\operatorname{lt}(f)$. For an operadic ideal $I$, $\operatorname{lt}(I)$ is the ideal generated by the leading terms of all elements of $I$.

\begin{defi}
A set of relations $G = \{ g_i \}$ is called a Gr\"{o}bner basis for $P = F(V)/I$, if $I = (G)$ and operadic ideals $(  \operatorname{lt}(G)  )$ and $\operatorname{lt}(I) = \operatorname{lt}((G))$. coincide. 
\end{defi}

We will make use of the following fact, see Corollary 3 in \cite{4DK} and Theorem 3.12 in \cite{4KK} for the colored case.

\begin{fact}
An operad with a quadratic Gr\"{o}bner basis is Koszul.
\end{fact}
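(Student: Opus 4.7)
The plan is to reduce Koszulity of an operad $\P = \Free(V)/I$ admitting a quadratic Gr\"obner basis $G$ to the monomial case, via a deformation and spectral-sequence argument. First I would pass to the leading-term operad $\P^{\operatorname{lt}} := \Free(V)/(\operatorname{lt}(G))$, which is a quadratic monomial operad by hypothesis on $G$. The standard normal-form argument shows that the set of tree monomials of $\Free(V)$ containing no $\operatorname{lt}(g)$, $g \in G$, as a sub-tree descends to a basis of both $\P$ and $\P^{\operatorname{lt}}$; so the two operads have identical Poincar\'e-Hilbert series and, more importantly, one obtains an ascending filtration on $\P$ (by the admissible monomial order) whose associated graded is $\P^{\operatorname{lt}}$.

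Second, I would prove Koszulity directly for the monomial quadratic operad $\P^{\operatorname{lt}}$. The Koszul complex admits an explicit combinatorial description in terms of decorated planar trees, and one builds a contracting homotopy on the augmented complex by the classical monomial trick: on each labelled tree, select the minimal (with respect to the fixed admissible monomial order) forbidden quadratic sub-tree, and flip its status between the $\P^{\operatorname{lt}}$-part and the $(\P^{\operatorname{lt}})^!$-part of the complex. Because monomial relations do not interact with each other, this rule is local; the colored and unary-generator generalizations amount to purely combinatorial bookkeeping.

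Third, to transfer Koszulity from $\P^{\operatorname{lt}}$ to $\P$, I would use the filtration from the first step. Compatibility with operadic composition induces filtrations on both $\operatorname{Bar}(\P)$ and $\mathcal{S}^c \boxtimes (s^{-1}\P^!)^*$, and the associated graded of the canonical comparison morphism is precisely the Koszul morphism for $\P^{\operatorname{lt}}$, which is a quasi-isomorphism by the second step. Finiteness in $\vect$ (boundedness below, finite dimensionality in each inner degree) together with the arity grading ensures that the resulting spectral sequence converges, so it collapses at the next page and forces the original comparison morphism to be a quasi-isomorphism as well, establishing Koszulity of $\P$.

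The main obstacle is the middle step: constructing the contracting homotopy on the monomial Koszul complex in a colored, non-symmetric, possibly-unary setting. The local flipping rule must be compatible with colored gluing along edges, and one must verify that $dh + hd = \operatorname{id}$ away from the unit even when several minimal forbidden sub-trees can coexist on a single tree, with unary vertices present. Once this combinatorial point is settled, the first and third steps are formal consequences, in the spirit of the classical Priddy and Dotsenko--Khoroshkin proofs.
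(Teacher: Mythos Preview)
The paper does not give its own proof of this Fact; it simply records the statement and cites Corollary~3 in \cite{4DK} and Theorem~3.12 in \cite{4KK} for the colored case. Your three-step outline---degenerate to the leading-term monomial operad $\P^{\operatorname{lt}}$, establish Koszulity of quadratic monomial operads directly via an explicit contracting homotopy on the Koszul complex, and then transfer back through the filtration spectral sequence---is exactly the strategy of those references, so your proposal is correct and coincides with the argument the paper is invoking. The technical caution you flag (making the local flipping homotopy work in the presence of colors and unary generators) is precisely the content added in \cite{4KK} over the original \cite{4DK} treatment.
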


There are different ways to check whether a set of relations forms a Gr\"{o}bner basis. If the dimensions of the components in the operad are already known, then there is a straightforward approach via normal forms.

\begin{defi}
\ \\ 
\begin{itemize}
    \item A tree monomial is a {\em normal form} with respect to $G$ if it is not divisible by a leading term of any $g \in G$.
    \item An arbitrary expression in $\Free(V)$ is a normal form if all its monomials are normal forms.
\end{itemize}
\end{defi}

It is true that (the images of) normal forms span the quotient $\Free(V)/(G)$, no matter if $G$ is a Gr\"{o}bner basis or not. However, if $G$ {\em is} a Gr\"{o}bner basis, the converse is also true.

\begin{fact}
\label{criterion}
$G$ is a Gr\"{o}bner basis if and only if the number of normal forms in every operadic component coincides with its dimension.
\end{fact}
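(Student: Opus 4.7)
The plan is to prove both implications simultaneously via a dimension count in each arity/color component. The starting observation, valid irrespective of the Gr\"obner property, is that normal forms always span $\P = \Free(V)/(G)$. Indeed, if a tree monomial $m$ is divisible by $\operatorname{lt}(g)$ for some $g\in G$, then writing $g = c\cdot\operatorname{lt}(g) + (\text{strictly smaller terms})$ and substituting inside the operadic composition that realizes the divisibility, admissibility of the monomial order guarantees that $m$ is congruent modulo $(G)$ to a combination of tree monomials strictly smaller than $m$. The process terminates since each component is finite-dimensional and the admissible order restricts to a well-order on its monomials. Hence in every component
\[
\#\{\text{normal forms in }\P_n\} \;\geq\; \dim \P_n,
\]
and since $(\operatorname{lt}(G))$ is a monomial ideal, the left-hand side is exactly $\dim \Free(V)_n/(\operatorname{lt}(G))_n$.

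The next step is the key Hilbert-function lemma: for any operadic ideal $I \subseteq \Free(V)$ and any component indexed by $n=(c_1,\ldots,c_n;c)$, one has $\dim I_n = \dim \operatorname{lt}(I)_n$. To prove it, I would take any vector-space basis of the finite-dimensional space $I_n$ and perform Gaussian elimination in decreasing order of leading monomials, producing a basis $\{f_1,\ldots,f_k\}$ whose leading monomials are pairwise distinct. These leading monomials are then linearly independent tree monomials spanning $\operatorname{lt}(I)_n$, which gives the claim. Applied to $I=(G)$, this yields $\dim \Free(V)_n/\operatorname{lt}((G))_n = \dim \P_n$.

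Combining these with the tautological inclusion $(\operatorname{lt}(G)) \subseteq \operatorname{lt}((G))$ produces the chain
\[
\#\{\text{normal forms}\}_n = \dim \Free(V)_n/(\operatorname{lt}(G))_n \;\geq\; \dim \Free(V)_n/\operatorname{lt}((G))_n = \dim \P_n.
\]
Equality in every component is equivalent to the equality of monomial ideals $(\operatorname{lt}(G))_n = \operatorname{lt}((G))_n$ for all $n$, which is precisely the Gr\"obner basis condition. Both directions of the fact fall out of this one inequality.

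The main delicate point I anticipate is the identification, in the colored operadic setting, of the na\"ive component-wise vector space $\operatorname{lt}(I)_n$ with the $n$-th component of the ideal generated by all leading terms of elements of $I$. This identification relies on the second admissibility axiom (compatibility with compositions) to ensure $\operatorname{lt}(\alpha \circ_i \beta) = \operatorname{lt}(\alpha) \circ_i \operatorname{lt}(\beta)$, so that taking leading terms commutes with operadic composition and the ideal-theoretic and vector-space notions of $\operatorname{lt}(I)_n$ agree. Once this is in place, the rest is a direct transcription of the classical associative-algebra argument.
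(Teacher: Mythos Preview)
The paper does not supply a proof of this statement: it is recorded as a \emph{Fact}, with the surrounding discussion pointing to \cite{4DK} and \cite{4KK} as the standard references. So there is no in-paper argument to compare against.

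Your argument is the standard one and is correct. The three ingredients---normal forms span the quotient by descending reduction, the Hilbert-function identity $\dim I_n = \dim \operatorname{lt}(I)_n$ obtained by row-reducing a basis of $I_n$ to have distinct leading monomials, and the tautological inclusion $(\operatorname{lt}(G)) \subseteq \operatorname{lt}((G))$---combine exactly as you describe to give the equivalence. Your closing remark about needing $\operatorname{lt}(\alpha \circ_i \beta) = \operatorname{lt}(\alpha)\circ_i\operatorname{lt}(\beta)$ from the composition-compatibility axiom is the right technical point to flag.

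One small caveat worth tightening: you justify termination of the reduction process by saying each component of $\Free(V)$ is finite-dimensional. The paper explicitly allows unary generators in $\widetilde{\P}$, and in general the presence of unary operations can make $\Free(V)(c_1,\ldots,c_n;c)$ infinite-dimensional (arbitrarily long stems). In the paper's actual examples this does not occur, because the unary generators strictly enlarge the color (e.g.\ $U_i(I)\colon I \to I\cup\{i\}$), so no infinite stems exist and your finiteness claim holds. For a fully general statement one would instead invoke that admissible monomial orders are well-orders, which is how \cite{4DK} and \cite{4KK} set things up; the paper's Definition of admissibility omits this condition, but it is implicitly in force.
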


\section{Main construction}
\subsection{Operad of a polytope.}
The central construction of the paper produces a colored operad from a polytope with certain additional combinatorial choices. 
\begin{defi}
A polytope $P$ is {\it directed} if its 1-skeleton is an oriented graph with no cycles, one source and one sink, and the same holds for every face of $P$.
\end{defi}

The conditions above hold for convex polytopes with direction coming from a linear functional, but we consider directed polytopes abstractly. Vertices of a directed polytope are partially ordered: $v_1 \leq v_2$ if there exists a directed edge-path from $v_1$ to $v_2$. This partial order can be extended to a non-reflexive operation on the faces of arbitrary codimension.

\begin{defi}
Let $F_1$ and $F_2$ be two faces of a directed polytope $P$. Then $F_1 \leq F_2$ if $\min F_1 \leq \max F_2$.
\end{defi}

%\begin{defi}
%For a polytope $P$, its algebra $T_{P}$ is the completed tensor algebra of a graded vector space $V_{P}$ spanned by generators $x_{F}$, where $F$ is a face of $P$, and $|x_{F}| = \dim F$.
%\end{defi}

%So $T_P$ is an algebra of noncommutative power series. We add a formal invertible variable $t$ and require it to commute with everything -- this results in $T_{P}[t,t^{-1}]$. Poincar\'e-Hilbert series of our colored operad will live in this algebra. \\

Note that $F \leq F$ only holds when $F$ is a vertex. 

\begin{defi} A sequence of faces $(F_1, \ldots, F_n)$ is a {\it face chain} in a face $F$ if $F_i \subset F$ for any $i$ and $F_1 \leq \ldots \leq F_n$. The {\it excess} of $(F_1, \ldots, F_n)$ in $F$ is $(\dim F-1) - \sum (\dim F_i-1)$. The set of face chains in $F$ of length $n$ with excess $l$ is denoted by $\fc_l(F,n)$. \end{defi}

\begin{rem}
The notion of excess generalizes codimensions. Indeed, for a chain of length 1 the formula gives usual the codimension. Longer chains start having big codimensions if they are not fat enough for their length.
\end{rem}
Note that in general, excesses can be both positive and negative: for example, if a 3-dimensional polytope has a chain $F_1 < F_2 < F_3$ of 2-dimensional faces, then this chain would have excess $(3-1) - ((2-1) + (2-1) + (2-1)) = -1$. However, cases like this are unwelcome: the theory developed in this paper seems to work well precisely for the polytopes where excesses of nontrivial chains are strictly positive. \\

We now define $\mathcal{O}_P$, a colored operad in graded vector spaces associated to the directed polytope $P$.

\begin{defi}
The set of colors is given by all faces of $P$. The operation spaces are 

$$\mathcal{O}_P(F_1, \ldots, F_n; F) = \begin{cases} k[l-n+1] & (F_1, \ldots, F_n) \in \fc_l(F,n) \\ 0 & \text{else} \end{cases} $$

Thus the total grading of every operation is the excess of the corresponding chain. The composition maps are either $k[m] \simeq k[m]$ or $0  \to k[m]$.
\end{defi}

\begin{theo}
$\mathcal{O}_P$ is well-defined.
\end{theo}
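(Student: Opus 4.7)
The claim is that the prescribed data assembles into a colored operad in $\vect$, so the plan is to verify three things: (i) every composition map is a well-defined map of graded vector spaces, (ii) strict associativity, and (iii) the unit axioms. A useful structural observation at the outset is that each space $\mathcal{O}_P(F_1,\ldots,F_n;F)$ is at most one-dimensional, so every elementary composition $\circ_i$ is either an identity isomorphism $k\otimes k\to k$ (in the appropriate graded slot) or the zero map, and the verification reduces to chain-level combinatorics.

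For (i) and (iii) the argument is short. If $(F_1,\ldots,F_n)\in\fc_l(F,n)$ and $(F'_1,\ldots,F'_m)\in\fc_{l'}(F_i,m)$ and the concatenation at position $i$ is again a chain in $F$, the $\dim F_j-1$ terms telescope so that the total excess is $l+l'$ and the arity is $n+m-1$. The resulting target shift is $(l-n+1)+(l'-m+1)$, which matches the source tensor grading. For units, the singleton $(F)$ is always a valid chain in $F$ of excess $0$, so $\mathcal{O}_P(F;F)=k[0]$, and substituting $(F_i)$ for $F_i$ in any chain, or substituting any chain into the unique slot of $(F)$, reproduces the same chain.

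The heart of the argument is (ii). Since every composition is either an identity on a one-dimensional space or zero, both sides of any associativity diagram agree provided they are simultaneously nonzero or simultaneously zero. The key lemma I plan to establish, treating sequential and parallel associativity uniformly, is that \emph{the twice-substituted chain is a valid face chain in $F$ if and only if each of the two intermediate once-substituted chains is itself a valid face chain}. The forward direction is immediate: every adjacency required in an intermediate is either one of the hypotheses (inherited from an input chain like $(G_j)$ or $(H_j)$) or survives verbatim as an adjacency in the final chain. The reverse direction is symmetric — a failed adjacency in an intermediate either lives in an input chain (impossible by assumption) or lives in the final chain (contradicting its validity). The hard part, although not deep, is that the face-order relation $F_1\leq F_2 \Leftrightarrow \min F_1 \leq \max F_2$ is \emph{not} transitive, so relations in the final chain cannot be propagated to the intermediates by transitivity; each required adjacency must instead be located explicitly, using that the sinks and sources of a subface $F_i\subset F$ are compared correctly in the ambient partial order. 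Once this combinatorial equivalence is in place, both associativity axioms are forced by the fact that both composites reduce to the same identity map $k^{\otimes 3}\to k$, or to zero on both sides.
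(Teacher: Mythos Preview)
Your plan misplaces the essential content. In step (i) you write ``and the concatenation at position $i$ is again a chain in $F$'' as a hypothesis, but this is precisely the statement that carries the whole proof. The paper shows it always holds: if $(F_1,\ldots,F_n)$ is a chain in $G_i$ and $(G_1,\ldots,G_m)$ is a chain in $G$, then the substituted sequence is automatically a chain in $G$. The only two adjacencies not already given, namely $G_{i-1}\le F_1$ and $F_n\le G_{i+1}$, follow from the input adjacencies $G_{i-1}\le G_i$ and $G_i\le G_{i+1}$ combined with $\min G_i\le\min F_1$ and $\max F_n\le\max G_i$, the latter holding because $F_1,F_n\subset G_i$. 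Once this is done, associativity really is a tautology: every elementary composition with nonzero source is the identity $k\otimes k\to k$, so any two bracketings of a triple give the same map.

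Your step (ii) is therefore unnecessary, and the argument you sketch for it has a gap. The dichotomy ``every adjacency required in an intermediate is either one of the hypotheses\ldots or survives verbatim as an adjacency in the final chain'' is false. Take the intermediate obtained by substituting the middle chain $(G_j)$ into the outer chain $(F_j)$ at slot $i$: the new adjacency $F_{i-1}\le G_1$ lies in no input chain, and if the inner substitution occurs at $j=1$ it does not survive in the final chain either, since there $F_{i-1}$ is adjacent to $H_1$, not $G_1$. That adjacency must instead be derived from the inputs via the source/sink comparison you allude to at the end --- but that derivation is exactly the single-composition argument above, after which your bidirectional lemma is redundant.
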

\begin{proof}
We need to verify that we do not encounter a situation when the source of the composition map is nonzero while the target is zero. Once this is verified, the associativity of composition is a tautology.  We limit ourselves to pseudooperadic elementary compositions, and thus look at the map $\circ_i$:

$$\xymatrix { \mathcal{O}_P(F_1, \ldots, F_n; G_i) \otimes \mathcal{O}_P(G_1, \ldots, G_m; G) \ar[d] \\  \mathcal{O}_P(G_1, \ldots, G_{i-1},F_1, \ldots, F_n,G_{i+1}, \ldots, G_m; G) } $$

Suppose the source of this map does not vanish. This means that $(F_1,\ldots,F_n)$ is a face chain of $G_i$, and $(G_1, \ldots, G_m)$ is a face chain of $G$. Then in the target, the sequence $(G_1, \ldots, G_{i-1},F_1, \ldots, F_n, G_{i+1}, \ldots, G_m)$ is indeed a face chain of $G$. Its elements are clearly included in $G$ because the inclusions are composable. The inequality $G_{i-1} \leq F_1$ holds because we have $\max G_{i-1} \leq \min G_i$ from the face chain condition on $(G_1, \ldots, G_m)$ and $\min G_i \leq \min F_1$ because $F_1 \subset G_i$. The argument for $F_n \leq G_{i+1}$ is similar.
\end{proof}

Let us look at the smallest examples.

\begin{eg}
Let $P$ consist of just one point $x$. A chain where this point repeats $n$ times has excess $n-1$. So $\fc_{n-1}(x,n)$ consists of one chain and $\fc_l(x,n)$ is empty for $l \neq n-1$. This means that we have one operation of inner degree 0 in each arity, so $\mathcal{O}_P = Ass$. 
\end{eg}

\begin{eg}
Let $P$ be the interval $s$ with endpoints $x$ and $y$. Then $\fc_l(x,n)$ and $\fc_l(y,n)$ are the same as before. Inside $s$, there are face chains $x^i s y^j$ with $i+j = n$, $i \geq 0$ and $j \geq 0$, which have excess $n$, and there are face chains chains $x^iy^j$ for $i+j = n$, which also have excess $n$. Algebras over $\mathcal{O}_P$ consist of tuples $(A_x,M_s,A_y,[ \text{  }\text{  } ])$, where $A_x$ and $A_y$ are associative algebras, $M_s$ is an $A_x-A_y$ bimodule, and $[\text{  }\text{  }]$ is a bilinear form of inner degree $1$ on $A_x \otimes A_y$ with values in $M_s$. Also $\mathcal{O}_P$ has unary operations  $x \to s$ and $y \to s$, which correspond to maps $A_x \to M_s$ and $A_y \to M_s$ -- these are suggestively denoted by respectively  $[,1]$ and $[1,]$, though formally they are not expressed through the bracket because our algebras are not unital. 
\end{eg}

This latter example conveys the general flavour of our construction. For bigger polytopes $P$, algebras over corresponding operads will consist of multiple associative algebras and bimodules, with various bimodule-valued (not necessarily binary) brackets between them all. \\

Before moving on, let us take a look at the Poincar\'e-Hilbert endomorphisms of the operads in the examples above. For the point $x$, we have 

$$ f_x = x+tx^2+ t^2x^3 \ldots = \frac{x}{1-tx} $$

For the interval $s$ with endpoints $x$ and $y$, we have 

$$ f_x = \frac{x}{1-tx} $$
$$ f_y = \frac{y}{1-ty} $$
$$ f_s =  \frac{1}{1-xt}(s+(x+y-xy)t)\frac{1}{1-yt} $$

It can be checked by a direct computation that both these endomorphisms satisfy the involutive property. This brings us to a conjecture that for some polytopes, their operads are self-dual. We subsequently prove this conjecture for simplices, for products thereof, and for all polygons. In section \ref{algebra} we explain the generality in which we expect the conjecture to hold.

\subsection{Functoriality.}
We now describe the functoriality of our construction. \\

Let $Poly$ be the category whose objects are directed polytopes, and whose morphisms are inclusions that are injective on face posets and respect directions. The category $Poly$ is monoidal with respect to the obvious product. Let $ColOp$ be the category of colored DG-operads. It is monoidal with respect to the product that multiplies the color sets. \\

\begin{theo}
The assignment $O: P \mapsto \mathcal{O}_P$ forms a monoidal functor $Poly \to ColOp$. 
\end{theo}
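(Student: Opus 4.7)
The statement splits into functoriality and monoidality. I would first describe the action of $O$ on morphisms: given $\iota \co P \to Q$ in $Poly$, let $\mathcal{O}_\iota \co \mathcal{O}_P \to \mathcal{O}_Q$ act by $F \mapsto \iota(F)$ on colors and by the canonical identity $k \to k$ on the generator of each nonzero operation space $\mathcal{O}_P(F_1, \ldots, F_n; F) = k[l - n + 1]$. This is well-defined provided face chains are sent to face chains of the same excess. Since $\iota$ injects face posets and respects inclusions, all relevant dimensions are preserved; since $\iota$ respects directions, the partial order $F_i \leq F_j$, which is determined by directed edge-paths between $\min$ and $\max$, is preserved. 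All operadic compositions on either side are either identity scalar maps or zero maps $0 \to k$, so $\mathcal{O}_\iota$ tautologically commutes with the operad structure, and functoriality $\mathcal{O}_{\iota' \iota} = \mathcal{O}_{\iota'} \mathcal{O}_\iota$ follows.

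For the monoidal constraint, I would construct a natural isomorphism between $\mathcal{O}_P \otimes \mathcal{O}_Q$ (with $\otimes$ denoting the product on $ColOp$ that multiplies color sets and tensors operation spaces arity by arity) and $\mathcal{O}_{P \times Q}$. On colors this uses the tautological bijection identifying faces of $P \times Q$ with pairs of faces $(F,G)$. The central combinatorial observation is that the partial order on faces of $P \times Q$ is the product of the partial orders on $P$ and $Q$: this follows from $\min(F \times G) = (\min F, \min G)$ and $\max(F \times G) = (\max F, \max G)$ in the Cartesian product of directed 1-skeletons. Hence a sequence $(F_1 \times G_1, \ldots, F_n \times G_n)$ is a face chain in $F \times G$ iff $(F_i)$ and $(G_i)$ are face chains in $F$ and $G$ respectively. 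Using $\dim(F \times G) = \dim F + \dim G$, a direct count of excesses gives
\[ l_{F \times G} \;=\; l_F + l_G + 1 - n, \]
so the tensor product shift $(l_F - n + 1) + (l_G - n + 1) = l_F + l_G - 2n + 2$ equals the target shift $l_{F \times G} - n + 1 = l_F + l_G + 2 - 2n$. Since all nonzero composition maps are again identity scalar maps, this identification is an iso of colored DG-operads, manifestly natural in $(P,Q)$.

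The monoidal unit of $Poly$ is the one-point polytope; the first example after the definition of $\mathcal{O}_P$ identifies $\mathcal{O}_{\mathrm{pt}}$ with $Ass$, which is the monoidal unit of $ColOp$, so the unitors are strict on both sides, as is the associator. The only genuinely substantive point in this plan is the excess identity $l_{F \times G} = l_F + l_G + 1 - n$: the convention $l - n + 1$ baked into the definition of $\mathcal{O}_P$ is precisely the unique one making the tensor product of generators land in the correct degree, and this is the real combinatorial content of monoidality -- it should be highlighted rather than buried.
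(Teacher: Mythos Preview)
Your proposal is correct and follows the same approach as the paper, which simply asserts that the two claims (functoriality on inclusions and $\mathcal{O}_{P_1 \times P_2} = \mathcal{O}_{P_1} \times \mathcal{O}_{P_2}$) are ``straightforward'' without further elaboration. Your write-up in fact supplies the substantive content the paper omits, namely the excess identity $l_{F \times G} = l_F + l_G + 1 - n$ and the resulting match of inner gradings, which is indeed the only nontrivial check.
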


\begin{proof}
An inclusion of directed polytopes $P \to Q$  gives a map of operads $\mathcal{O}_P \to O_Q$. For two polytopes $P_1$ and $P_2$, we have $O_{P_1 \times P_2} = O_{P_1} \times O_{P_2}$. Both statements are straightforward.
\end{proof}

\section{First examples: simplices and cubes}
In this section we consider the case of $P = \Delta_n$, the standard simplex. We prove that its operad $O_{\Delta_n}$ is Koszul, by constructing its quadratic Gr\"{o}bner basis. As already mentioned, Koszulity allows us to replace the computationally difficult notion of Bar-duality by a simpler notion of quadratic duality. We then observe quadratic self-duality of $O_{\Delta_n}$ and thus prove the involutive property of its Poincar\'e-Hilbert series. Functoriality from the previous section also implies the self-duality for operads corresponding to cubes. \\

We describe the operad $O_{\Delta_n}$ by generators and relations. In a simplex, faces correspond to all nonempty subsets $I \subset [0,n]$ -- these are the colors of our operad.

\begin{defi2}
A binary tree is {\em right-leaning} if for every inner vertex, its left incoming edge is a leaf.
\end{defi2}

\begin{lem2}
\label{generators}
Generating operations for $\mathcal{O}_{\Delta_n}$ are of two types: unary and binary. Generating unary operations are elementary inclusions $U_i(I)$ from color $I$ to color $I+i: = I\cup \{i \}$, where $i$ is some index outside $i$. Generating binary operations $B(I,J)$ are from colors $I$, $J$ to color $I+J: = I \cup J$, where $I$ and $J$ are subsets of indices such that $\max I = \min J$. They will be depicted as follows:
\begin{center}
\makebox[20mm]{\begin{forest}
for tree = {grow'=90, draw, minimum width = 4pt, inner sep = 2pt, s sep = 15pt}
[I+i  [I]]
\end{forest}} and \makebox[20mm]{\begin{forest}
for tree = {grow'=90, draw, minimum width = 4pt, inner sep = 2pt, s sep = 15pt}
[I+J [I][J]]
\end{forest}} 
\end{center}

\end{lem2}

\begin{proof}
Consider an arbitrary nonzero operation in $O_{\Delta_n}$. It goes from colors $I_1$, $\ldots$, $I_k$ to color $I$, where $\max I_s \leq \min I_{s+1}$ for every $s$. We express this operation through $U$ and $B$ as follows. At every color $I_s$, we apply unary operations appending, in descending order, all indices between $\max I_{s}$ and $\max I_{s-1}$ that are present in $I$ and not in $I_s$, up until the moment when we append $\max I_{s-1}$. Then we find ourselves in situation that all colors overlap, so we apply binary operations right to left. Thus our nonzero operation becomes represented by a right-leaning binary tree with stems attached to its leaves, suggestively called {\em the normal form}. 
\end{proof}

\begin{eg2}
Here is the normal form for the operation from colors $I_1 = 13$, $I_2 = 6$ and $I_3 = 6$ into the color $I = 12346$.

\begin{center}
\begin{forest}
for tree = {grow'=90, draw, minimum width = 4pt, inner sep = 2pt, s sep = 15pt}
[12346 [123, tier = 1[13]] [346 [346[46[6]]] [6, tier = 1] ]]
\end{forest}    
\end{center}
\end{eg2}

\begin{lem2}
\label{relations}
Relations on operations $U$ and $B$ are of the following types:
\begin{enumerate}
    \item for $I$, $J$, $K$ with $\max I = \min J$ and $\max J = \min K$, there is a relation $B(B(I,J),K)) - B(I,B(J,K))$:
    \begin{center}
 \makebox[20mm]{\begin{forest}
for tree = {grow'=90, draw, minimum width = 4pt, inner sep = 2pt, s sep = 15pt}
[I+J+K [I+J [I,tier = 1] [J]][K, tier = 1]]
\end{forest}} --             \makebox[20mm]{\begin{forest}
for tree = {grow'=90, draw, minimum width = 4pt, inner sep = 2pt, s sep = 15pt}
[I+J+K [I, tier = 1][ J+K [J, tier = 1][ K, tier = 1]]]
\end{forest}}
    \end{center}
    
    \item for $I$ and $J$ with $\max I < \min J$, there is a relation $B(U_{\min J}(I),J) - B(I,U_{\max I}(J))$:
    
    \begin{center}
    \makebox[20mm]{\begin{forest}
for tree = {grow'=90, draw, minimum width = 4pt, inner sep = 2pt, s sep = 30pt}
[I+J [I+min J [I,tier = 1]] [J, tier = 1]]
\end{forest}}
-- 
    \makebox[20mm]{\begin{forest}
for tree = {grow'=90, draw, minimum width = 4pt, inner sep = 2pt, s sep = 30pt}
[I+J[I, tier = 1][max I+J [J, tier = 1]]]
\end{forest}}
    \end{center}

    \item for $I$, $J$ and $i$ with $\max I = \min J$ and $i < \max I$, there is a relation $B(U_i(I),J) - U_i(B(I,J))$:
    
    \begin{center}
\makebox[20mm]{\begin{forest}
for tree = {grow'=90, draw, minimum width = 4pt, inner sep = 2pt, s sep = 15pt}
[i+I+J [I+J [I] [J]]]
\end{forest}} 
--
    \makebox[20mm]{\begin{forest}
for tree = {grow'=90, draw, minimum width = 4pt, inner sep = 2pt, s sep = 15pt}
[i+I+J [i+I [I,tier = 1] ][J, tier = 1]] 
\end{forest}}
    \end{center}

     \item for $I$, $J$ and $j$ with $\max I = \min J$ and $j > \max J$, there is a relation $U_j(B(I,J)) - B(I,U_j(J))$:
     
\begin{center}
    \makebox[20mm]{\begin{forest}
for tree = {grow'=90, draw, minimum width = 4pt, inner sep = 2pt, s sep = 15pt}
[I+J+j [I+J [I] [J]]]
\end{forest}} -- \makebox[20mm]{\begin{forest}
for tree = {grow'=90, draw, minimum width = 4pt, inner sep = 2pt, s sep = 15pt}
[I+J+j [I, tier = 1] [J+j [J, tier = 1]]]
\end{forest}}
\end{center}  

\item for $I$, $i$ and $j$ with $i < j$, there is a relation $U_i(U_j(I)) - U_j(U_i(I))$:

\begin{center}
\makebox[20mm]{\begin{forest}
for tree = {grow'=90, draw, minimum width = 4pt, inner sep = 2pt, s sep = 15pt}
[I+i+j [I+i [I]]]
\end{forest}} -- \makebox[20mm]{\begin{forest}
for tree = {grow'=90, draw, minimum width = 4pt, inner sep = 2pt, s sep = 15pt}
[I+i+j [I+j [I]]]
\end{forest}}
\end{center}

\end{enumerate}

\end{lem2}
\begin{proof}
The relations above are sufficient to bring any tree monomial to the normal form featured in the proof of Lemma \ref{generators}. Indeed, relations of type 3 and type 4 can be applied until the moment that all stems become attached to the top of the binary tree. Then relations of type 1 can be applied until the binary tree becomes right-leaning. Then relations of type 2 can be applied until stems become attached to the rightmost possible leaf of the binary tree. And finally, relations of type 5 can be applied until within every stem, the indices are appended in the descending order.
\end{proof}

To proceed with our proof of Koszulity, we now describe a monomial order on tree monomials consisting of operations $U$ and $B$. We first order the generating operations in the following way: 

\begin{itemize}
    \item All unary operations are smaller than all binary operations.
    \item $U_i(I)$ and $U_j(J)$ are first compared by the lengths of $I$ and $J$; if lengths coincide, then $I$ and $J$ are compared lexicographically; if $I = J$, then $U_i(I) < U_j(I)$ if $i>j$.
    \item $B(I,J)$ and $B(I',J')$ are first compared by the length of $I\cup J$ and $I' \cup J'$; if lengths coincide, then $I\cup J$ and $I' \cup J'$ are compared lexicographically; if they coincide, then the length of $I$ is compared to the length of $I'$.
\end{itemize}

For example, in  $O_{\Delta_1}$ the resulting order on generating operations is as follows:

$$ U_1(0) < U_0(1) < B(0,0)< B(1,1) < B(0,01) < B(01,1) $$

We now declare an order on tree monomials. To every tree monomial $T$ with $k$ leaves we associate a sequence $S(T)$ of $k$ words in the alphabet consisting of generating operations. The $i$th word in this sequence is obtained by going from $i$th leaf to the root and recording all operations that are encountered on the way. Tree monomials are then compared by their sequences: 

\begin{itemize}
    \item We first compare the number of words/leafs.
    \item If those coincide, we lexicographically compare vectors encoding word lengths.
    \item If those coincide, we lexicographically compare first words, then second words and so on until we encounter a difference. 

\end{itemize}

This order on tree monomials is a minor modification of the well-known {\em path-lexicographic order}, and can be easily checked to be admissible. \\

Note that the relations in Lemma \ref{relations} are already written in such a way that their leading terms go first. In the computations to follow, this will always be the way to write things.

\begin{theo2}
Relations from Lemma \ref{relations} form a quadratic Gr\"{o}bner basis in $O_{\Delta_n}$.
\end{theo2}

\begin{proof}
In the proof of Lemma \ref{relations} we have explained how any tree monomial with given inputs and given output can be brought to what we have suggestively called its normal form. Now that we have fixed the monomial order, we see that abovementioned normal forms are indeed normal forms with respect to the relations $G$, and that these are the only normal forms: the applications of relations in the proof of Lemma \ref{relations} precisely correspond to lead-reducing in the those relations. Thus in every colored arity, the number of normal forms coincides with the dimension of the corresponding component of the operad (both being either 0 or 1), so $G$ is a Gr\"{o}bner basis according to Fact \ref{criterion}.
\end{proof}

Alternatively, we could have computed all the $S$-polynomials and shown that they reduce to $0$. Here is an example of such a computation. \\

For two relations both being of Type 1, their S-polynomial is defined when their leading terms intersect like this:

\begin{center}
 \makebox[20mm]{\begin{forest}
for tree = {grow'=90, draw, minimum width = 4pt, inner sep = 2pt, s sep = 15pt}
[I+J+K, fill = green [I+J, fill = green [I,tier = 1] [J]][K, tier = 1, fill = green]]
\end{forest}} --             \makebox[20mm]{\begin{forest}
for tree = {grow'=90, draw, minimum width = 4pt, inner sep = 2pt, s sep = 15pt}
[I+J+K [I, tier = 1][ J+K [J, tier = 1][ K, tier = 1]]]
\end{forest}}
and 
 \makebox[30mm]{\begin{forest}
for tree = {grow'=90, draw, minimum width = 4pt, inner sep = 2pt, s sep = 15pt}
[I+J+K+L [I+J+K, fill = green [I+J,tier = 1, fill = green] [K, fill = green]][L, tier = 1]]
\end{forest}} --             \makebox[25mm]{\begin{forest}
for tree = {grow'=90, draw, minimum width = 4pt, inner sep = 2pt, s sep = 15pt}
[I+J+K+L [I+J, tier = 1][ K+L [K, tier = 1][ L, tier = 1]]]
\end{forest}}
    \end{center}
    
their S-polynomial is this:

\begin{center}
 ( \makebox[30mm]{\begin{forest}
for tree = {grow'=90, draw, minimum width = 4pt, inner sep = 2pt, s sep = 15pt}
[I+J+K+L, fill = cyan [I+J+K, fill = cyan [I+J [I,tier = 1] [J]][K, tier = 1]] [L, tier = 1, fill = cyan]]
\end{forest}} --             \makebox[30mm]{\begin{forest}
for tree = {grow'=90, draw, minimum width = 4pt, inner sep = 2pt, s sep = 15pt}
[I+J+K+L, fill = cyan [I+J+K, fill = cyan [I, tier = 1][ J+K [J, tier = 1][ K, tier = 1]]] [L, fill = cyan, tier = 1]]
\end{forest}} ) --
\end{center}

\begin{center}
( \makebox[30mm]{\begin{forest}
for tree = {grow'=90, draw, minimum width = 4pt, inner sep = 2pt, s sep = 15pt}
[I+J+K+L [I+J+K [I+J, fill = cyan [I, fill = cyan] [J, tier = 1, fill = cyan]] [K, tier = 1]][L, tier = 1]]
\end{forest}} --             \makebox[30mm]{\begin{forest}
for tree = {grow'=90, draw, minimum width = 4pt, inner sep = 2pt, s sep = 15pt}
[I+J+K+L [I+J, fill = cyan [I, tier = 1, fill = cyan] [J, fill = cyan]][ K+L [K, tier = 1][ L, tier = 1]]]
\end{forest}} ) = 
\end{center}

\begin{center}
-- \makebox[30mm]{\begin{forest}
for tree = {grow'=90, draw, minimum width = 4pt, inner sep = 2pt, s sep = 15pt}
[I+J+K+L,  [I+J+K [I, tier = 1][ J+K [J, tier = 1][ K, tier = 1]]] [L, tier = 1]]
\end{forest}} 
+ 
\makebox[30mm]{\begin{forest}
for tree = {grow'=90, draw, minimum width = 4pt, inner sep = 2pt, s sep = 15pt}
[I+J+K+L [I+J [I, tier = 1] [J]][ K+L [K, tier = 1][ L, tier = 1]]]
\end{forest}} 
\end{center}

Reducing with respect to

\begin{center}
 \makebox[25mm]{\begin{forest}
for tree = {grow'=90, draw, minimum width = 4pt, inner sep = 2pt, s sep = 15pt}
[I+J+K+L [I+J+K [I,tier = 1] [J+K]][L, tier = 1]]
\end{forest}} --             \makebox[25mm]{\begin{forest}
for tree = {grow'=90, draw, minimum width = 4pt, inner sep = 2pt, s sep = 15pt}
[I+J+K+L [I, tier = 1][ J+K+L [J+K, tier = 1][ L, tier = 1]]]
\end{forest}}
\end{center}

means subtracting

\begin{center}
 -- \makebox[30mm]{\begin{forest}
for tree = {grow'=90, draw, minimum width = 4pt, inner sep = 2pt, s sep = 15pt}
[I+J+K+L [I+J+K [I,tier = 1] [J+K, fill = cyan [J, fill = cyan] [K, tier = 1, fill = cyan] ]][L, tier = 1]]
\end{forest}} +             \makebox[30mm]{\begin{forest}
for tree = {grow'=90, draw, minimum width = 4pt, inner sep = 2pt, s sep = 15pt}
[I+J+K+L [I, tier = 1][ J+K+L [J+K, fill = cyan [J, fill = cyan] [K, tier = 1, fill = cyan]][ L, tier = 1]]]
\end{forest}}
\end{center}

so the reduction is equal to

\begin{center}
\makebox[30mm]{\begin{forest}
for tree = {grow'=90, draw, minimum width = 4pt, inner sep = 2pt, s sep = 15pt}
[I+J+K+L [I+J [I, tier = 1] [J]][ K+L [K, tier = 1][ L, tier = 1]]]
\end{forest}} 
-- 
\makebox[30mm]{\begin{forest}
for tree = {grow'=90, draw, minimum width = 4pt, inner sep = 2pt, s sep = 15pt}
[I+J+K+L [I, tier = 1][ J+K+L [J+K [J] [K, tier = 1]][ L, tier = 1]]]
\end{forest}}
\end{center}

Further reducing with respect to 

    \begin{center}
 \makebox[30mm]{\begin{forest}
for tree = {grow'=90, draw, minimum width = 4pt, inner sep = 2pt, s sep = 15pt}
[I+J+K+L [I+J [I,tier = 1] [J]][K+L, tier = 1]]
\end{forest}} --             \makebox[30mm]{\begin{forest}
for tree = {grow'=90, draw, minimum width = 4pt, inner sep = 2pt, s sep = 15pt}
[I+J+K+L [I, tier = 1][ J+K+L [J, tier = 1][ K+L, tier = 1]]]
\end{forest}}
    \end{center}
    
means subtracting 

    \begin{center}
 \makebox[30mm]{\begin{forest}
for tree = {grow'=90, draw, minimum width = 4pt, inner sep = 2pt, s sep = 15pt}
[I+J+K+L [I+J [I,tier = 1] [J]][K+L, fill = cyan [K, tier = 1, fill = cyan][L,fill = cyan]]]
\end{forest}} --             \makebox[35mm]{\begin{forest}
for tree = {grow'=90, draw, minimum width = 4pt, inner sep = 2pt, s sep = 15pt}
[I+J+K+L [I, tier = 1][ J+K+L [J, tier = 1][K+L, fill = cyan [K, tier = 1, fill = cyan][L,fill = cyan]]]]
\end{forest}}
    \end{center}

so the reduction is equal to

\begin{center}
-- \makebox[30mm]{\begin{forest}
for tree = {grow'=90, draw, minimum width = 4pt, inner sep = 2pt, s sep = 15pt}
[I+J+K+L [I, tier = 1][ J+K+L [J+K [J] [K, tier = 1]][ L, tier = 1]]] \end{forest}}
+
\makebox[35mm]{\begin{forest}
for tree = {grow'=90, draw, minimum width = 4pt, inner sep = 2pt, s sep = 15pt}
[I+J+K+L [I, tier = 1][ J+K+L [J, tier = 1][K+L [K, tier = 1][L]]]]
\end{forest}}
\end{center}

This clearly reduces to $0$ with respect to

\begin{center}
 \makebox[20mm]{\begin{forest}
for tree = {grow'=90, draw, minimum width = 4pt, inner sep = 2pt, s sep = 15pt}
[J+K+L [J+K [J,tier = 1] [K]][L, tier = 1]]
\end{forest}} --             \makebox[20mm]{\begin{forest}
for tree = {grow'=90, draw, minimum width = 4pt, inner sep = 2pt, s sep = 15pt}
[J+K+L [J, tier = 1][ K+L [K, tier = 1][ L, tier = 1]]]
\end{forest}}
    \end{center}

\begin{cor2}
Operad $O_{\Delta_n}$ is Koszul.
\end{cor2}

We now compute its quadratic dual.

\begin{theo2}
$O_{\Delta_n}$ is quadratically self-dual.
\end{theo2}

\begin{proof}
The arities in which quadratic tree monomials are encountered are precisely the arities listed in  Lemma \ref{relations} (note that non-quadratic monomials are not encountered in those arities). In each of those arities, there are two possible tree monomials and one relation, equal to the difference of those two. So essentially the argument is the same as in proving that the associative operad is quadratically self-dual.
\end{proof}

\begin{cor2}
Operad $O_{\Delta_n}$ is Bar self-dual, and its Poincar\'e-Hilbert endomorphism satisfies the involutive property.
\end{cor2}

Note that a cube with standard directions is simply a product of several intervals. Thus the functoriality from the previous section also implies the statement for all cubes.

\begin{cor2}
For a cube $I^n$, its operad $O_{I^n}$ is Bar self-dual, and its Poincar\'e-Hilbert endomorphism satisfies the involutive property.
\end{cor2}

\section{Another example: polygons}
For polygons larger than the triangle, generating operations may be of arbitrary arity, but relations remain quadratic. So this section is structurally indistinguishable from the previous section, only the description of normal forms is a bit less pleasant. \\

Let $P$ be a polygon, with the source vertex labelled by $0 = x(0) = y(0)$, the sink vertex labelled by $1 = x(n+1) = y(m+1)$, vertices of the upper path labelled by $x(1)$ to $x(n)$, edges of the upper path labelled by $e(0)$ to $e(n)$, vertices of the lower path labelled by $y(1)$ to $y(m)$ and edges of the lower path labelled by $f(0)$ to $f(m)$: 

\begin{center}
\begin{tikzpicture}

\node (P) at (3,0) {$P$};

\node[circle, draw] (0) at (0,0) {0};
\node[circle, draw] (x1) at (2,1) {$x(1)$};
\node[circle, draw] (xn) at (4,1) {$x(n)$};
\node[circle, draw] (1) at (6,0) {$1$};

\node[circle, draw] (y1) at (2,-1) {$y(1)$};
\node[circle, draw] (ym) at (4,-1) {$y(m)$};

\draw[->] (0) -- (x1) node[midway, above] {$e(0)$};
\draw[->] (xn) -- (1) node[midway, above] {$e(n)$};

\draw[->] (0) -- (y1) node[midway, below] {$f(0)$};
\draw[->] (ym) -- (1) node[midway, below] {$f(m)$};

\draw[->, dashed] (x1) to[out=40,in=140] (xn);
\draw[->, dashed] (y1) to[out=-40,in=-140] (ym);

\end{tikzpicture}
\end{center}

\begin{lem2}
\label{generators2}
Generating operations for $O_{P}$ are of the following types:

\begin{enumerate}
    \item left vertex inclusions $U_{\lef}(x(i))$ and $U_{\lef}(y(i))$:
    
    \begin{center}

\makebox[20mm]{\begin{forest}
for tree = {grow'=90, draw, minimum width = 4pt, inner sep = 2pt, s sep = 15pt}
[e(i)  [x(i)]]
\end{forest}}
\makebox[20mm]{\begin{forest}
for tree = {grow'=90, draw, minimum width = 4pt, inner sep = 2pt, s sep = 15pt}
[f(i)  [y(i)]]
\end{forest}}
\end{center}

    \item right vertex inclusions $U_{\rig}(x(i))$ and $U_{\rig}(y(i))$:
\begin{center}
\makebox[20mm]{\begin{forest}
for tree = {grow'=90, draw, minimum width = 4pt, inner sep = 2pt, s sep = 15pt}
[e(i-1)  [x(i)]]
\end{forest}} 
\makebox[20mm]{\begin{forest}
for tree = {grow'=90, draw, minimum width = 4pt, inner sep = 2pt, s sep = 15pt}
[f(i-1)  [y(i)]]
\end{forest}} 
\end{center}

\item vertex-vertex actions $B(x(i),x(i))$ and $B(y(i),y(i))$:

\begin{center}
\makebox[30mm]{\begin{forest}
for tree = {grow'=90, draw, minimum width = 4pt, inner sep = 2pt, s sep = 15pt}
[x(i)  [x(i)] [x(i)] ]
\end{forest}}
\makebox[30mm]{\begin{forest}
for tree = {grow'=90, draw, minimum width = 4pt, inner sep = 2pt, s sep = 15pt}
[y(i)  [y(i)] [y(i)] ]
\end{forest}}
\end{center}

    \item vertex-edge actions $B(x(i),e(i))$ and $B(y(i),f(i))$:
    \begin{center}
\makebox[30mm]{\begin{forest}
for tree = {grow'=90, draw, minimum width = 4pt, inner sep = 2pt, s sep = 15pt}
[e(i)  [x(i)] [e(i)] ]
\end{forest}}
\makebox[30mm]{\begin{forest}
for tree = {grow'=90, draw, minimum width = 4pt, inner sep = 2pt, s sep = 15pt}
[f(i)  [y(i)] [f(i)] ]
\end{forest}}
\end{center}
    \item edge-vertex actions $B(e(i-1),x(i))$ and $B(f(i-1),y(i))$:

\begin{center}
\makebox[30mm]{\begin{forest}
for tree = {grow'=90, draw, minimum width = 4pt, inner sep = 2pt, s sep = 15pt}
[e(i-1)  [e(i-1) ] [x(i)] ]
\end{forest}} 
\makebox[30mm]{\begin{forest}
for tree = {grow'=90, draw, minimum width = 4pt, inner sep = 2pt, s sep = 15pt}
[f(i-1) [f(i-1)] [y(i)]]
\end{forest}} 
\end{center}
    \item 0-action $B(0,P)$:
\begin{center}
    \makebox[20mm]{\begin{forest}
for tree = {grow'=90, draw, minimum width = 4pt, inner sep = 2pt, s sep = 15pt}
[P  [0] [P]]
\end{forest}}
\end{center}
    \item 1-action $B(P,1)$:
\begin{center}
    \makebox[20mm]{\begin{forest}
for tree = {grow'=90, draw, minimum width = 4pt, inner sep = 2pt, s sep = 15pt}
[P  [P] [1]]
\end{forest}}
\end{center}
    
\item edge-sequences (note that the sequence of edges may be of length 1, to account for edge inclusions, or may be disconnected) $M(e(i_1), \ldots, e(i_k))$ and $M(f(i_1), \ldots, f(i_k))$ for $i_1 < \ldots < i_k$:
\begin{center}
\makebox[20mm]{\begin{forest}
for tree = {grow'=90, draw, minimum width = 4pt, inner sep = 2pt, s sep = 15pt}
[P  [e(i1)] [e(i2)] [...] [e(ik)]]
\end{forest}}
\end{center}
\begin{center}
\makebox[20mm]{\begin{forest}
for tree = {grow'=90, draw, minimum width = 4pt, inner sep = 2pt, s sep = 15pt}
[P  [f(i1)] [f(i2)] [...] [f(ik)]]
\end{forest}}
\end{center}
    
\end{enumerate}
\end{lem2}

\begin{proof}
We only have to deal with operations that have output $P$ (operations with other outputs are covered by considerations for simplices).  Consider a nonzero operation whose inputs are the sequence $\sigma$, consisting of: several (maybe none) instances of $0$, then some sequence of edges and vertices along the upper path, then several (maybe none) instances of $1$. For every vertex $v$ along the upper path, we define its subword $\sigma_v$ as follows: 
\begin{itemize}
 %   \item $\sigma_0$ consists of all instances of $0$ in $\sigma$, and of $e(0)$ if $e(0)$ is in $\sigma$
    \item $\sigma_{x(i)}$ for $i<n$ consists of all instances of $x(i)$ in $\sigma$, and of $e(i)$ if $e(i)$ is in $\sigma$
    \item $\sigma_{x(n)}$ consists of all instances of $x(n)$ in $\sigma$, of $e(n)$ whenever it is in $\sigma$, and of all instances of $1$ in $\sigma$
    \item $\sigma_1$ is empty when $\sigma_{x(n)}$ is nonempty; otherwise it consists of $e(n)$ whenever it is in $\sigma$ and of all instances of $1$ in $\sigma$
\end{itemize}

Thus $\sigma$ is represented as a concatenation of subwords $\sigma_v$ (ignore the empty ones). For every subword $\sigma_{x(i)}$ with $0 < i < n$, we apply generators in one of the following ways, depending on whether $e(i)$ is already in $\sigma_{x(i)}$ or not, to obtain an operation with output $e(i)$:

\begin{center}
 \makebox[45mm]{\begin{forest}
for tree = {grow'=90, draw, minimum width = 4pt, inner sep = 2pt, s sep = 15pt}
[ e(i) [x(i), tier = 1][e(i) [x(i), tier = 1][e(i), edge = dashed [x(i)][e(i), tier = 1]]]]
\end{forest}} or             \makebox[45mm]{\begin{forest}
for tree = {grow'=90, draw, minimum width = 4pt, inner sep = 2pt, s sep = 15pt}
[ e(i) [x(i), tier = 1][e(i) [x(i), tier = 1][e(i), edge = dashed [x(i)][e(i), tier = 1 [x(i)]]]]]
\end{forest}}
\end{center}
    
For the subword $\sigma_{x(n)}$ (or for $\sigma_1$, depending on which one is nonempty) we apply generators in one of the following ways, depending on whether $e(n)$ is already in the subword or not:

\begin{center}
 \makebox[40mm]{\begin{forest}
for tree = {grow'=90, draw, minimum width = 4pt, inner sep = 2pt, s sep = 15pt}
[e(n),[x(n), tier = 1] [e(n), edge = dashed [e(n), tier = 1] [1 [1] [1, edge = dashed, tier = 1]]] ]
\end{forest}} or             \makebox[40mm]{\begin{forest}
for tree = {grow'=90, draw, minimum width = 4pt, inner sep = 2pt, s sep = 15pt}
[e(n),[x(n), tier = 1] [e(n), edge = dashed [e(n), tier = 1 [1]] [1 [1] [1, edge = dashed, tier = 1]]] ]
\end{forest}}
\end{center}

Then we apply an operation with inputs $e(0)$ (if it is in $\sigma$) and  $ \{ e(i_s) \}$ (where $e(i_s)$ are outputs of the operations described above), and with output $P$. And finally, we apply $0$-action as many times as needed to deal with $\sigma_0$. As with the simplices, this particular tree monomial is suggestively called the normal form (which it will be, after we fix the monomial order and write out the relations). \\

The case of lower path is identical, after replacing $x$ with $y$ and $e$ with $f$.
\end{proof}

\begin{eg2}
Consider the case of $n=2$ and $\sigma = (0,0,e(0),x(1),x(1),x(2),e(2),1)$. Then the subwords are $\sigma_0 = (0,0,e(0))$, $\sigma_{x(1)} = (x(1),x(1))$, $\sigma_{x(2)} = (x(2),e(2),1)$, and the corresponding normal form is this:
\begin{center}
\begin{forest}
for tree = {grow'=90, draw, minimum width = 4pt, inner sep = 2pt, s sep = 15pt}
[P [0, tier = 1] [ P [0]  [P, tier = 1 [e(0)] [ e(1) [x(1)] [e(1)[x(1), tier = 2]]] [ e(2) [x(2), tier = 2] [e(2) [e(2)] [1]]] ]]]
\end{forest}
\end{center}

Informally, all of this was to say: for the normal form we choose the most right-leaning tree of all.
\end{eg2}

In the following lemma we list the (straightforward though tiresome to write) relations arising on quadratic tree monomials.

\begin{lem2}
\label{relationsp}
Relations $G$ on the generating operations are of the following types:
\end{lem2}

\begin{enumerate}
    \item generalized simple associativities, i.e. relations of the type 
    $$ B(?,B(?,?)) - B(B(?,?),?)$$
    where arity is one of the following:
    \begin{itemize}
        \item  $(x(i)^3;x(i))$ or $(y(i)^3;y(i))$
        \item  $(x(i)^2,e(i);e(i))$ or $(y(i)^2,f(i);f(i))$
        \item $(x(i),e(i),x(i+1);e(i))$ or $(y(i),f(i),y(i+1);f(i))$
        \item $(e(i),x(i+1)^2;e(i))$ or $(f(i),y(i+1)^2;f(i))$
        \item $(0^2,P;P)$
         \item $(P,1^2;P)$
    \end{itemize}

    \item in arity $(0, e(0), e(i_1), \ldots, e(i_s); P)$ or  $(0, f(0), f(i_1), \ldots, f(i_s); P)$  for $0<i_1<\ldots<i_s$:
    $$ B \big( 0, M \left( e(0), e(i_1), \ldots, e(i_s) \right) \big) - M(B(0,e(0)),e(i_1), \ldots, e(i_s)) $$
    or 
    $$ B(0, M( f(0), f(i_1), \ldots, f(i_s))) - M(B(0,f(0)),f(i_1), \ldots, f(i_s)) $$
    
    \item in arity $(0 = x(0), e(i_1), \ldots, e(i_s); P)$  or $(0 = y(0), f(0), \ldots, f(i_s); P)$ for $0<i_1<\ldots<i_s$:
    $$ B(x(0), M( e(i_1), \ldots, e(i_s))) - M(U_{\lef}(x(0)),e(i_1), \ldots, e(i_s)) $$
    or 
    $$ B(y(0), M( f(i_1), \ldots, f(i_s))) - M(U_{\lef}(y(0)),f(i_1), \ldots, f(i_s)) $$
    
    \item in arity $(e(i_1), \ldots, e(i_s), e(n), 1; P)$ or  $(f(i_1), \ldots, f(i_s), f(n), 1; P)$  for $i_1<\ldots<i_s < n$:
    $$ M(e(i_1), \ldots, e(i_s), B(e(n), 1)) - B(M(e(i_1), \ldots, e(i_s), e(n)), 1) $$
    or 
    $$ M(f(i_1), \ldots, e(f_s), B(f(n), 1)) - B(M(f(i_1), \ldots, f(i_s), f(n)), 1) $$
    
    \item in arity $(e(i_1), \ldots, e(i_s), 1 = x(n+1); P)$ or  $(f(i_1), \ldots, f(i_s), 1 = y(m+1); P)$  for $i_1<\ldots<i_s < n$:
    $$ M(e(i_1), \ldots, e(i_s), U_{\rig}(x(n+1)) - B(M(e(i_1), \ldots, e(i_s)), x(n+1))$$ 
    or 
    $$ M(f(i_1), \ldots, f(i_s), U_{\rig}(y(m+1)) - B(M(f(i_1), \ldots, f(i_s)), y(m+1))$$  
    
    \item in arity $(e(i_1), \ldots, e(i_a), x(j),  e(i_{a+1}), \ldots, e(i_s); P)$ or \linebreak $(f(i_1), \ldots, f(i_a), y(j),  f(i_{a+1}), \ldots, f(i_s); P)$  for $i_1< \ldots < i_s$ and $i_{a}+1 < j < i_{a+1}$:
    \begin{align*}
& M(e(i_1), \ldots, e(i_a), U_{\lef}(x(j)),  e(i_{a+1}), \ldots, e(i_s)) - \\
& M(e(i_1), \ldots, e(i_a), U_{\rig}(x(j),  e(i_{a+1}), \ldots, e(i_s)) 
    \end{align*}
    or
    \begin{align*}
& M(f(i_1), \ldots, f(i_a), U_{\lef}(y(j),  f(i_{a+1}), \ldots, f(i_s))- \\
& M(f(i_1), \ldots, f(i_a), U_{\rig}(y(j),  f(i_{a+1}), \ldots, f(i_s)) 
    \end{align*}
    
    \item in arity $(e(i_1), \ldots, e(i_a), x(i_a),  e(i_{a+1}), \ldots, e(i_s); P)$ or \linebreak $(f(i_1), \ldots, f(i_a), y(i_a),  f(i_{a+1}), \ldots, f(i_s); P)$ for $i_1< \ldots < i_s$:
    
    \begin{align*}
& M(e(i_1), \ldots, B(e(i_a),x(i_a)),  e(i_{a+1}), \ldots, e(i_s)) - \\
& M(e(i_1), \ldots, e(i_a), U_{\lef}(x(i_a)),  e(i_{a+1}), \ldots, e(i_s)) 
    \end{align*}
    or
    \begin{align*}
& M(f(i_1), \ldots, B(f(i_a),y(i_a)),  f(i_{a+1}), \ldots, f(i_s))- \\
& M(f(i_1), \ldots, f(i_a), U_{\lef}(y(i_a)),  f(i_{a+1}), \ldots, f(i_s)) 
    \end{align*}
    
    \item in arity $(e(i_1), \ldots, e(i_a), x(i_{a+1}+1),  e(i_{a+1}), \ldots, e(i_s); P)$ or \linebreak $(f(i_1), \ldots, f(i_a), y(i_{a+1}+1),  f(i_{a+1}), \ldots, f(i_s); P)$ for $i_1< \ldots < i_s$:
    
    \begin{align*}
& M(e(i_1), \ldots, B(x(i_a+1), e(i_{a+1})), \ldots, e(i_s)) - \\
& M(e(i_1), \ldots, e(i_a), U_{\rig}(x(i_a+1)),  e(i_{a+1}), \ldots, e(i_s)) 
    \end{align*}
    or
    \begin{align*}
& M(f(i_1), \ldots, B(y(i_a),f(i_{a+1})), \ldots, f(i_s))- \\
& M(f(i_1), \ldots, U_{\rig}(y(i_a+1)),  f(i_{a+1}), \ldots, f(i_s)) 
    \end{align*}
    
    \item in arity $(e(i_1), \ldots, e(i_a), x(i_a+1),  e(i_{a+1}), \ldots, e(i_s); P)$ or \linebreak $(f(i_1), \ldots, f(i_a), y(i_a+1),  f(i_{a+1}), \ldots, f(i_s); P)$ for $i_1< \ldots < i_s$ and $i_{a}+1 = i_{a+1}$:
    
    \begin{align*}
& M(e(i_1), \ldots, B(e(i_a), x(i_a+1)), \ldots, e(i_s)) - \\
& M(e(i_1), \ldots, B(x(i_a+1), e(i_{a+1})), \ldots, e(i_s))
    \end{align*}
    or
    \begin{align*}
& M(f(i_1), \ldots, B(f(i_a), y(i_a+1)), \ldots, f(i_s)) - \\
& M(f(i_1), \ldots, B(y(i_a+1), f(i_{a+1})), \ldots, f(i_s))
    \end{align*}
    
\end{enumerate}

\begin{proof}
As before, we show that these relations are sufficient to bring any tree monomial to its normal form described in the proof of Lemma \ref{generators2}. We delay the description of this procedure until we have defined the monomial order, so that we could simultaneously prove that $G$ is a Gr\"{o}bner basis.
\end{proof}

Similarly to the previous section, we derive path-lexicographic order on all tree monomials from the following order on generating operations: 

\begin{itemize}
    \item operation of a smaller arity is smaller 
    \item for unary operations, $U_{\rig}<U_{\lef}<M(e) $
    \item for binary operations, we first compare outputs: vertices are smaller than edges, edges are smaller than $P$, vertices and edges are ordered left to right; if outputs coincide, we compare the input sequences lexicographically.
    \item for operations $M(\sigma)$ and $M(\sigma')$ with $|\sigma|=|\sigma'|>2$ the sequences $\sigma$ and $\sigma'$ are compared lexicographically.
    
\end{itemize}

\begin{theo2}
Relations from Lemma \ref{relationsp} form a quadratic Gr\"{o}bner basis of $\mathcal{O}_P$.
\end{theo2}
\begin{proof}
Similarly to the case of simplices, we observe that normal forms described above are irreducible with respect to $G$ and are the only tree monomials of given arities with this property, thus the number of normal forms coincides with the dimension of the respective operadic component. Indeed, let $T$ be a tree monomial of arity $(\sigma;P)$, with $\sigma$ being a sequence as in the proof of Lemma \ref{generators2}. \\

\begin{enumerate}
    \item By lead-reducing in relations of types 1, 2 and 3 we can bring $T$ to the form 
\begin{center}
\begin{forest}
for tree = {grow'=90, draw, minimum width = 4pt, inner sep = 2pt, s sep = 15pt}
[P [0, tier = 1] [[0, tier = 1] [P, edge = dotted [0, tier = 1] [P [T', edge = dotted]]]]]
\end{forest}
\end{center}
where rooted at $P$, there is a tree monomial $T'$ with its inputs $\sigma'$ devoid of $0$. We now work with $T'$.

\item By lead-reducing in relations of types 4 and 5, we bring $T'$ to the form where the bottom operation is $M$: 

\begin{center}
\begin{forest}
for tree = {grow'=90, draw, minimum width = 4pt, inner sep = 2pt, s sep = 15pt}
[P [e(i1) [{T1}, edge = dotted]] [e(i2) [{T2}, edge = dotted]] [...] [e(ik) [{Tk}, edge  = dotted]]] 
\end{forest}
\end{center}

\item By lead-reducing in relations of types $6$, $7$, $8$ and $9$, we ensure that every nonterminal vertex appearing in $\sigma$ belongs to the subtree $T_{l}$ whose root is an edge $e(i_l)$ to the right of this vertex.

\item Finally, by read-reducing again in relations of type 1, we ensure that trees $T_i$ are right-leaning.
\end{enumerate}

\end{proof}

\begin{cor2}
The operad $\mathcal{O}_P$ is Koszul.
\end{cor2}

\begin{theo2}
The operad $\mathcal{O}_P$ is quadratically self-dual.
\end{theo2}
\begin{proof}
Similarly to the case of the simplices, the arities in which quadratic tree monomials are encountered are precisely the arities listed in  Lemma \ref{relationsp}, with no non-quadratic tree monomials in these arities. In each of these arities, there are two possible tree monomials and one relation, equal to the difference of those two, so again the argument is a generalization of the argument for self duality of the associative operad.
\end{proof}

\begin{cor2}
The operad $\mathcal{O}_P$ is Bar self-dual, and its Poincar\'e-Hilbert endomorphism satisfies the involutive property.
\end{cor2}

\section{Shortness and integrated $A_\infty$-coalgebras}
\label{algebra}
We now explain the generality in which we expect the self-duality conjecture to hold.

\begin{defi2}
A directed polytope $P$ is called short if nontrivial chains have positive excesses.
\end{defi2}

In terms of operads, shortness has a very precise meaning: for a short polytope $P$, its operad $\mathcal{O}_P$ is nonnegatively graded with respect to the {\em total} grading, with $O^0_P$ being just the semisimple algebra $R = \bigoplus k \cdot 1_F$: in other words, the operad is {\em augmented}.

\begin{con2}
\label{conjecture}
For short polytopes their operads are Koszul and Koszul self-dual.
\end{con2}

Examples of short polytopes include simplices and polygons. Products of short polytopes are also short, thus adding cubes into this class.  In a sequel to this paper we prove shortness for freehedra.\\

Our evidence for the Conjecture \ref{conjecture}, besides proved results for simplices and polygons, includes Sage verification of the involutive property of the Poincar\'e-Hilbert endomorphism of $\mathcal{O}_P$ for $P$ several non-standard directions of 3D cube, pyramid, octahedron and 3D freehedron. The involutive property held for all short polytopes and failed for all non-short polytopes. \\

Now suppose that there is some magician to fix all the signs for us. Formally, let us work over $\mathbb{F}_2$ (although we expect our theory to work generally). Let $P$ be a short polytope, and let $C_*(P)$ be the {\em graded vector space} of cellular chains. Consider the degree $k$ maps 

$$\Delta_{n}^{k}: C_*(P)[1] \to C_*(P)[1]^{\otimes{n}}$$
$$ F \mapsto \sum _{(F_1, \ldots, F_m) \in \fc_k(F,n)} F_1 \otimes \ldots \otimes F_n$$

where the face generator $F$ maps to the sum of tensor products of chains in $F$ that have length $n$ and excess $k$. For example, $\Delta_1^0$ is the identity map and $\Delta_1^{1}$ is the cellular differential. A more interesting observation: for the case of simplices, $\Delta_2^1$ is the Alexander-Whitney diagonal, and for the case of cubes, $\Delta_2^1$ is the Serre diagonal. \\

The following (elementary but surprising) theorem explains the connection of the theory built in the present paper with the $A_\infty$-world (see \cite{4Kel} for exposition).

\begin{theo2}
Shortness condition and self-duality of $\mathcal{O}_P$ together imply that operations $\Delta_{n}^{1}$ for $n \geq 1$ assemble into $A_\infty$-coalgebra structure on $C_*(P)$.
\end{theo2}
\begin{proof}
The Poincar\'e-Hilbert endomorphism can be written as a $t$-linear endomorphism of $\widehat{T}(C_*(P)[1])[[t]]$ which is, on generators, given by
$$ \operatorname{Id} + \Delta^1 t + \Delta^2 t^2 + \ldots $$
where $\Delta^k$ is the sum $\sum_{n \geq 1} \Delta^k_n$.
The involutive property says that, modulo signs, this is an involution. So if we extend $\Delta^1$ from generators as a derivation, it would square to $0$ (note that we use working modulo $2$ when we say $\operatorname{Id}\Delta^2 + \Delta^2 \operatorname{Id} = 0$). This is precisely the compact definition of $A_\infty$-relations.
\end{proof}

\begin{rem2}
For the theory to work with signs, we certainly need to replace identity with parity; putting signs in order is a work in progress.
\end{rem2}

This motivates the following definition.\\

\begin{defi2}
The structure of an integrated $A_\infty$-coalgebra
 on a graded vector space $V$ is the data of $t$-linear involution on the completed tensor algebra $\widehat{T}(V[1])$ \end{defi2}

Our original goal was to apply this machinery for associahedra with Tamari directions. Unfortunately, in dimensions $\geq 4$ they fail to be short (though all chains of length $2$ do have positive excesses, thus allowing the Saneblidze-Umble diagonal \cite{4SU} to satisfy Leibniz rule). A further direction of our research is to develop a modification of $\mathcal{O}_P$-construction that would provide an augmented Koszul self-dual operad for associahedra.\\

The idea for this modification is to reduce the class of admissible face chains. The hint for this reduction is contained in the diagonal for permutahedra. In permutahedra, chains $F_1 < F_2$ of length 2 may have nonpositive excess. However, as explained in \cite{L-A}, there is a stronger condition $F_1 << F_2$ which fails for all chains of nonpositive excess, and some of their subchains. In is an open question to formulate higher analogues of this condition that will outrule longer chains with nonpositive excess.

\end{document}